\newenvironment{thmbis}[1]
{%
	\addtocounter{thm}{-1}%
	\begin{thm}}
	{\end{thm}}
\newtheorem{thm}{Theorem}[section]
\newtheorem*{fixed point criterion}{Fixed point criterion}
\newtheorem{cor}[thm]{Corollary}
\newtheorem{lem}[thm]{Lemma}
\newtheorem{prop}[thm]{Proposition}
\theoremstyle{definition}
\newtheorem{defn}[thm]{Definition}
\newtheorem{exam}[thm]{Example}
\newtheorem{ques}[thm]{Question}
\theoremstyle{remark}
\numberwithin{equation}{section}
\newcommand{\Z}{\mathbb Z}
\newcommand{\fix}{\mathrm{Fix}}
\newcommand{\ind}{\mathrm{ind}}
\newcommand{\aut}{\mathrm{Aut}}
\newcommand{\out}{\mathrm{Out}}
\newcommand{\Out}{\mathrm{Out}}
\newcommand{\inn}{\mathrm{Inn}}
\newcommand{\id}{\mathrm{id}}
\newcommand{\Lef}{\mathrm{L}}
\newcommand{\Min}{\mathrm{M}}
\newcommand{\Nie}{\mathrm{N}}
\newcommand{\F}{\mathbf{F}}      
\def\a{\alpha}
\begin{document}
	
	\title{On Jiang's Bounded Index Property for products of nilmanifolds}

	\author{Peng Wang}
	\address{School of Mathematics and Statistics, Xi'an Jiaotong University, Xi'an 710049, CHINA}
	\email{wp580829@stu.xjtu.edu.cn}
	
	
	\author{Qiang Zhang}
	\address{School of Mathematics and Statistics, Xi'an Jiaotong University, Xi'an 710049, CHINA}
	\email{zhangq.math@mail.xjtu.edu.cn}
	
	\author{Xuezhi Zhao}
	\address{School of Mathematical Sciences, Capital Normal University, Beijing 100048, CHINA}
	\email{zhaoxve@mail.cnu.edu.cn}

	\thanks{The authors are partially supported by NSFC (No. 12271385, 12471066 and 12331003), the Shaanxi Fundamental Science Research Project for Mathematics and Physics (No. 23JSY027), and the Fundamental Research Funds for the Central Universities.}
	
	\subjclass[2010]{55M20, 55N10, 32Q45}
	
	\keywords{Fixed point index, Nielsen number, Lefschetz number, Bounded index property, Product of nilmanifolds}
	
	\date{\today}
	\begin{abstract}
		In 2023, Zhang and Zhao presented the first examples of aspherical manifolds lacking the Bounded Index Property (BIP) for fixed points. This answered a question posed by Jiang in 1998 in the negative. In this paper, we first extend the notion of BIP to that of iterates of selfmaps (BIP$_k$), and then demonstrate BIP$_k$ for certain products $M\times N$ of nilmanifolds. Finally, we give characterizations for the Lefschetz number, the Nielsen number, and the minimal number of fixed points of self-homotopy equivalences of $M\times N$.
	\end{abstract}
	\maketitle
	
	
	\section{Introduction}
	In this paper, unless otherwise specified, all maps considered are continuous, and all spaces are connected, triangulable, namely, they are homeomorphic to polyhedra. For a group $G$, let $\aut(G)$ (resp. $\out(G)$, $\inn(G)$) denote the automorphism (resp. outer automorphism, inner automorphism) group of $G$.
	
	For a selfmap $f$ of a space $X$, Nielsen fixed point theory (for details, see Jiang's monograph \cite{J83}) is concerned with
	the properties of the fixed point set
	$$\fix f:=\{x\in X\mid f(x)=x\}$$
	that splits into a disjoint union of \textit{fixed point classes}. For each fixed point class $\F$ of $f$, a homotopy invariant \textit{index} $\ind(f,\F)\in \Z$ is well-defined.
	The famous Lefschetz-Hopf theorem says that the sum of the indices of the fixed points of $f$ is equal to the \textit{Lefschetz number} $\Lef(f)$, which is defined as
	$$
	\Lef(f):=\sum_q(-1)^q\mathrm{Trace} (f_*: H_q(X;\mathbb{Q})\to H_q(X;\mathbb{Q})).
	$$
	
	A fixed point class $\F$ is \textit{essential} if $\ind(f, \F) \neq 0$. The number of essential fixed point classes of $f$ is called the \textit{Nielsen number} of $f$, denoted by $\Nie(f)$, which is a lower bound for the minimum number of fixed points in the homotopy class of $f$, denoted by
	$$\Min(f):=\min \{\#\fix g \mid g \simeq f\}.$$
	Recall that a well-known result of Jiang \cite{J80}\cite[p.22, Theorem 6.3]{J83} says: if $X$ is a compact connected polyhedron without local separating points and $X$ is not a surface (closed or with boundary) of negative Euler characteristic, then $\Min(f) = \Nie(f)$ for any map $f:X\to X$.

	A compact polyhedron $X$ is said to have the
	\emph{Bounded Index Property (BIP)} (resp. \emph{Bounded Index Property for
		Homeomorphisms (BIPH)}, \emph{Bounded Index Property for Homotopy
		Equivalences (BIPHE)}) if there is an integer $\mathcal{B}>0$ such that for
	any map (resp. homeomorphism, homotopy equivalence) $f:X\rightarrow X$ and
	any fixed point class $\mathbf{F}$ of $f$, the index $|\mathrm{ind}(f,%
	\mathbf{F})|\leq \mathcal{B}$. Clearly, if $X$ has BIP, then $X$ has BIPHE
	and hence has BIPH. For an aspherical closed manifold $M$, if the well-known
	Borel conjecture (any homotopy equivalence $f:M\rightarrow M$ is homotopic
	to a homeomorphism $g:M\rightarrow M$) is true, then $M$ has BIPHE if and
	only if it has BIPH.

	In 1998, Jiang \cite{J98} posed the following question.
	
	\begin{ques}\cite[Question 3]{J98}.\label{Jiang's question}
		Does every compact aspherical polyhedron $X$ (i.e. $\pi_i(X)=0$ for all $i>1$) have BIP or BIPH?
	\end{ques}
	
	In the past more than thirty years, many types of aspherical polyhedra had been showed to support Question \ref{Jiang's question} with a positive answer: infra-solvmanifolds have BIP \cite{Mc92}; graphs and hyperbolic surfaces have BIP \cite{J98, JWZ11, K97, K00, WZ25};  geometric 3-manifolds have BIPH \cite{JW}; orientable Seifert 3-manifolds with hyperbolic orbifolds have BIPH \cite{Z12}; products of hyperbolic surfaces have BIPH \cite{ZZ19};  products of negatively curved Riemannian manifolds have BIPHE \cite{Z14, Z15, ZY20}; and  finite wedges of compact surfaces each having non-positive Euler characteristic have BIP \cite{GK25}. But very recently in \cite{LWZ25, ZZ23}, the first counterexamples were given.
	
	\begin{thm}[\cite{LWZ25, ZZ23}]\label{ZZ cout-examp.} Let $\Sigma_g$ be a closed orientable surface of genus $g$, and let $S^1$ be the circle and $T^n$ the $n$-dimension torus. Then
		\begin{enumerate}
			\item $\Sigma_g\times S^1 (g\geq 2)$ has BIPH, but does not have BIP;
			\item  $\Sigma_g\times T^n (g, n\geq 2)$ does not have BIPH, and hence does not have BIP.
		\end{enumerate}
	\end{thm}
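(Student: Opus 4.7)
The plan is to handle the three assertions in sequence. For BIPH of $\Sigma_g\times S^1$ with $g\geq 2$, the strategy exploits topological rigidity: the group $\pi_1(\Sigma_g\times S^1)\cong\pi_1(\Sigma_g)\times\Z$ has center exactly equal to the $\Z$-factor, since $\pi_1(\Sigma_g)$ is centerless for $g\geq 2$. Hence any self-homeomorphism $h$ preserves the $S^1$-fibration up to homotopy and, after a further homotopy, can be taken to be a product $h=h_\Sigma\times h_{S^1}$. The product formula
\[
\ind(h_\Sigma\times h_{S^1},\,\mathbf{F}_\Sigma\times\mathbf{F}_{S^1})=\ind(h_\Sigma,\mathbf{F}_\Sigma)\cdot\ind(h_{S^1},\mathbf{F}_{S^1})
\]
reduces the bound to the factors: $\Sigma_g$ has BIP (a known surface result), while circle homeomorphisms have $|\ind|\leq 1$. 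Multiplying yields a uniform bound.

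For the failure of BIP on $\Sigma_g\times S^1$, I would exhibit a family of non-homeomorphism selfmaps $\{f_n\}$ whose essential fixed point classes have indices unbounded in $n$. The construction must not be homotopic to a product: the key requirement is that many geometric fixed points coalesce into a \emph{single} Nielsen class so that their indices add rather than split. For instance, the naive attempt $f_n(x,t)=(p,t^n)$ with $p$ a point splits instead into $n-1$ Nielsen classes of index $+1$ each, so a twist in the $\Sigma_g$-coordinate depending on $t$ (encoded by a nontrivial element of $\aut(\pi_1(\Sigma_g))$ attached to the $S^1$-generator) must be incorporated to merge these classes. Verifying the coalescence via the Reidemeister-trace formalism is the heart of the argument.

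For the failure of BIPH on $\Sigma_g\times T^n$ with $n\geq 2$, the presence of more than one circle factor permits the use of the much larger mapping class group $\mathrm{MCG}(T^n)\supset GL_n(\Z)$. My plan is to construct self-homeomorphisms $h_A$ of twisted-product type in which a matrix $A\in GL_n(\Z)$ acts on $T^n$ and a homeomorphism of $\Sigma_g$ (for instance an iterate of a pseudo-Anosov) is glued in via a cohomology class associated to $A$. For suitable $A$ the $T^n$-contribution to a Nielsen index is $|\det(I-A)|$, which can be made arbitrarily large, and this multiplies the $\Sigma_g$-contribution to yield essential Nielsen classes of unbounded index. The main obstacle, in both negative parts, is to identify Nielsen classes correctly --- showing that the candidate large fixed sets really lie in a single Reidemeister class rather than splitting into many bounded ones --- and this is where the center-theoretic structure of $\pi_1(\Sigma_g)\times\Z^k$ plays its essential role.
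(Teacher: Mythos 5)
Two of your three arguments contain errors that make them unrecoverable as written, and the mechanism you propose for producing unbounded indices is different from (and weaker than) the one the paper uses.

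\textbf{On the BIPH claim.} Your assertion that any self-homeomorphism of $\Sigma_g\times S^1$ is homotopic to a product map $h_\Sigma\times h_{S^1}$ is false. By Lemma~\ref{auto for product}, an automorphism of $\pi_1(\Sigma_g)\times\Z$ has the form $(\gamma,s)\mapsto(\alpha(\gamma),\rho(\gamma)+Ls)$ with $\rho:\pi_1(\Sigma_g)\to\Z$ an arbitrary homomorphism; when $\rho\neq 0$ and $L=1$, this automorphism is not even conjugate (in $\aut$) to a product, because the map $\psi(s)=Ls-s=0$ of Eq.~(\ref{eq. psi}) is not injective and Lemma~\ref{auto for product on finite index normal subgroup} does not apply. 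The actual reason BIPH holds, quoted by the paper from \cite{ZZ23}, exploits that $L\in\aut(\Z)=\{\pm1\}$ and treats the cases $L=1$ (where the fibre Lefschetz number vanishes) and $L=-1$ separately; a reduction to product maps is not available.

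\textbf{On the failure of BIPH for $\Sigma_g\times T^n$.} Your claimed mechanism --- that ``the $T^n$-contribution to a Nielsen index is $|\det(I-A)|$, which can be made arbitrarily large'' --- is not correct. For any selfmap of a torus, every essential fixed point class has index $\pm1$ (this is exactly what is invoked in the proof of Lemma~\ref{product formula for the index}, citing \cite{B75,H79}); $|\det(I-A)|$ is the \emph{number} of such classes, i.e.\ the Nielsen number, not the index of any one of them. So the $T^n$-factor alone can never produce a single class of large index, and multiplying it by a bounded $\Sigma_g$-contribution cannot either. The genuine source of unboundedness in the paper is the \emph{base} factor
\[
|\ind(f,\F)|=[\pi_1(\Sigma_g):p_\#(\fix f_\#)]\cdot|\chi(\Sigma_g)|
\]
from Lemma~\ref{product formula for the index}, applied to a fibre-preserving map inducing the \emph{identity} on $\Sigma_g$. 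The twist $\rho:\pi_1(\Sigma_g)\to\Z^n$ (shearing the fibre coordinate by the base coordinate, not the other way around as you suggest for the $S^1$-case) is tuned so that $p_\#(\fix f_\#)=\{u:\rho(u)\equiv 0\bmod m\}$ has index $m$; the role of the matrix $L_m$ is merely to make $\det(I-L_m^k)\neq 0$ so that essential classes exist and the coefficient of $\rho$ in the solution of the fixed-point equation is $1/m$. Your picture of pseudo-Anosovs on $\Sigma_g$ and large $|\det(I-A)|$ does not capture this and, as stated, would only control the Nielsen number, not any individual index. Your intuition about ``coalescence of classes'' for the $S^1$-case is pointing at the right phenomenon, but the concrete machinery (the fibre-product index formula combined with index-$m$ subgroups from $\rho\bmod m$) is what is actually needed, and it is absent from the proposal.
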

	
	In this paper, we extend the notions BIP, BIPH and BIPHE to iterates of $f:X\to X$.
	
	\begin{defn}
		A polyhedron $X$ is said to have BIP$_k$ (resp. BIPH$_k$, BIPHE$_k$) for an integer $ k>0$  if there exists an integer $\mathcal{B}$ such that for every map (resp. homeomorphism, homotopy equivalence) $f: X\to X$, we have
		$$|\ind(f^k,\F)|\leq \mathcal{B}$$
		for every fixed point class $\F$ of $f^k$.
	\end{defn}
	
	Note that BIP (resp. BIPH, BIPHE) $\Longleftrightarrow$ BIP$_1$ (resp. BIPH$_1$, BIPHE$_1$) $\Longrightarrow$ BIP$_k$ (resp. BIPH$_k$, BIPHE$_k$) for all integer $k>1$. In fact, there are aspherical manifolds lacking BIPH but having BIPH$_k$ for some integer $k>0$, see Example \ref{eg. closed 3-mfd}.
	
	The first result of this paper is the following.
	
	\begin{thm} \label{main thm 0}
		Let $\Sigma_g$ be a closed orientable surface of genus $g$, and let $S^1$ be the circle and $T^n$ the $n$-dimension torus. Then for any integer $k>0$,
		\begin{enumerate}
			\item $\Sigma_g\times S^1 (g\geq 2)$ has BIPH$_k$, but does not have BIP$_k$;
			\item  $\Sigma_g\times T^n (g, n\geq 2)$ does not have BIPH$_k$, and hence does not have BIP$_k$.
		\end{enumerate}
	\end{thm}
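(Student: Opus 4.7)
My plan is to split Theorem~\ref{main thm 0} into its positive and negative halves, using Theorem~\ref{ZZ cout-examp.} as the main input.

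For the positive half of (1): any self-homeomorphism $h$ of $\Sigma_g\times S^1$ has $h^k$ again a self-homeomorphism, so the uniform index bound $\B$ furnished by Theorem~\ref{ZZ cout-examp.}(1) applies directly to $h^k$ and gives $|\ind(h^k,\F)|\le\B$ for every fixed point class $\F$ of $h^k$. This establishes BIPH$_k$ with the same constant $\B$, with no new work.

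For the negative halves---failure of BIP$_k$ on $\Sigma_g\times S^1$ and failure of BIPH$_k$ on $\Sigma_g\times T^n$---the strategy is to reuse the counterexamples $f_N$ from \cite{LWZ25,ZZ23} and verify that their $k$-th iterates still carry fixed point classes of unbounded index. These counterexamples have skew-product form
\[
 f_N(x,\vec u)=\bigl(x,\,A_N\vec u+\vec\beta(x)\bigr)
\]
on $\Sigma_g\times T^n$, where in case (1) one takes $n=1$ and $A_N=N\to\infty$ (so $f_N$ is a self-map but not a homeomorphism) and in case (2) one takes $A_N=B^N$ for a fixed Anosov $B\in\mathrm{GL}_n(\Z)$ (so $f_N$ is a self-homeomorphism), while $\vec\beta:\Sigma_g\to T^n$ is chosen so that $\vec\beta_*(\pi_1(\Sigma_g))+(I-A_N)\Z^n=\Z^n$; this last condition collapses all $|\det(I-A_N)|$ fibrewise fixed points into a single essential Nielsen class of index on the order of $(2-2g)\det(I-A_N)$. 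Direct iteration now gives
\[
 f_N^k(x,\vec u)=\bigl(x,\,A_N^k\vec u+(I+A_N+\cdots+A_N^{k-1})\vec\beta(x)\bigr),
\]
again a skew product with parameter $A_N^k$, and the K\"unneth/product formula for fixed point indices on $\Sigma_g\times T^n$ expresses the per-class index of $f_N^k$ in terms of the Reidemeister data $(A_N^k,(I+A_N+\cdots+A_N^{k-1})\vec\beta_*)$.

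The main expected obstacle is the Nielsen-class bookkeeping for the iterate: both the Lefschetz number and the Nielsen number of $f_N^k$ grow with $N$, and a priori they could grow at the same rate, leaving per-class indices bounded. The resolution is a finite abelian-group calculation verifying that $\Z^n/\bigl((I+A_N+\cdots+A_N^{k-1})\vec\beta_*\pi_1(\Sigma_g)+(I-A_N^k)\Z^n\bigr)$ has order much smaller than $|\det(I-A_N^k)|$. In case (1), the divisibility relation $1+N+\cdots+N^{k-1}=(N^k-1)/(N-1)$ makes this quotient have order $(N^k-1)/(N-1)$, yielding per-class index of order $(2g-2)(N-1)\to\infty$. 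In case (2), an eigenvalue estimate on $B$ makes the analogous quotient negligible compared with $|\det(I-B^{Nk})|$, giving unbounded per-class indices. Once this estimate is in hand, both negative halves of Theorem~\ref{main thm 0} follow.
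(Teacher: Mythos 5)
Your strategy is the same one the paper follows: the BIPH$_k$ part of~(1) is the trivial observation that a uniform index bound for all self-homeomorphisms bounds $h^k$ in particular, and the two negative statements are proved by writing down explicit fibre-preserving maps over $\id_{\Sigma_g}$ and invoking the product formula for the fixed-point index (Lemma~\ref{product formula for the index} in the paper) so that the per-class index equals $[\pi_1(\Sigma_g):p_\#(\fix f_\#^k)]\cdot|\chi(\Sigma_g)|$. Your case~(1) computation ($A_N=N$, $\rho$ surjective, cancellation through $1+N+\cdots+N^{k-1}=(N^k-1)/(N-1)$) is literally the paper's construction after the substitution $N=m+1$, and the abelian-group identity $\Nie(f^k)=\bigl|\Z^n/\bigl((\sum_i A^i)\beta_*\pi_1(\Sigma_g)+(I-A^k)\Z^n\bigr)\bigr|$ that you use implicitly is correct and equivalent to computing $[\pi_1(\Sigma_g):p_\#(\fix\phi^k)]$ directly. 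So the route is right.

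The gap is in case~(2). You take $A_N=B^N$ for a fixed Anosov $B\in\mathrm{GL}_n(\Z)$ and ``choose $\beta$ so that $\beta_*(\pi_1(\Sigma_g))+(I-A_N)\Z^n=\Z^n$,'' then assert that an eigenvalue estimate makes the cokernel negligible. Two problems. First, that choice of $\beta$ requires the finite group $\Z^n/(I-B^N)\Z^n$ to be generated by at most $2g$ elements; since $\pi_1(\Sigma_g)^{\mathrm{ab}}\cong\Z^{2g}$ this is not automatic once $n>2g$, and even for small $n$ the minimal number of generators of $\Z^n/(I-B^N)\Z^n$ depends on the Smith normal form of $I-B^N$ and can jump around as $N$ varies. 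Second, the ``eigenvalue estimate'' is doing all the work and is never carried out: for a rank-one $\beta_*$ (the generic situation when $n$ is large) the cokernel is not obviously $o(|\det(I-B^{Nk})|)$, and one can easily imagine choices of $B$ for which $[\pi_1(\Sigma_g):p_\#(\fix\phi)]$ stays bounded even though $\det(I-B^N)\to\infty$. The paper sidesteps this entirely by engineering a parametrized family $L_m$ (a near-companion matrix with the distinguished entry $m$) and the rank-one $\beta_*=(\rho,0,\dots,0)$ so that $p_\#(\fix\phi^k)=\{u:\rho(u)\equiv 0\bmod m\}$ holds exactly, for every $k$ and every $n\ge 2$, giving per-class index $m(2g-2)$ on the nose. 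That direct control of the fibre is exactly what your sketch lacks, and without it the negative half of~(2) is not established.
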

	
	Moreover, note that the surface $\Sigma_g$ is of even dimension and the torus $T^n$ is a nilmanifold. Recall that, by work of Mal$^{\prime}$cev \cite{Mal49}, a closed \textit{nilmanifold} $N$ can be identified with the quotient space $\widehat{N}/G$, where $\widehat{N}$ is a simply connected nilpotent Lie group and $G \subset \widehat N$ is a cocompact lattice (which we identify with $\pi_1(N)$).  In contrast to Theorem \ref{main thm 0}, we have:
	
	\begin{thm}\label{main thm 1}
		Let $M$ be a closed negatively curved Riemannian manifold of odd dimension, and let $N$ be a closed nilmanifold. Then for any integer $k>0$ divisible by the order $|\out(\pi_1(M))|$, the product $M\times N$ has BIPHE$_k$ (and hence has BIPH$_k$).
	\end{thm}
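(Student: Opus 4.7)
My plan is: first reduce $f^k$ to a fiber-preserving self-map $F(x,y)=(x,H(x,y))$ of the product $M\times N$ over $\id_M$; then use the odd-dimensionality of $M$ to homotope $F$ to a fixed-point-free map, concluding that every fixed point class of $f^k$ has zero index and therefore BIPHE$_k$ holds with the universal bound $\B=0$.

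\smallskip

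\noindent\emph{Step 1: fibered normal form.} Since $M$ is closed negatively curved, $\pi_1(M)$ is torsion-free Gromov-hyperbolic with trivial Fitting subgroup: any nontrivial nilpotent normal subgroup would be infinite cyclic with cyclic centralizer of index at most $2$, forcing $\pi_1(M)$ to be virtually $\Z$, which is impossible. Therefore $\{1\}\times\pi_1(N)$ is the Fitting subgroup of $\pi_1(M)\times\pi_1(N)$ and hence characteristic. For any self-homotopy equivalence $f$, the induced automorphism consequently takes the form
\[
f_*(g,h)=(\alpha(g),\,\delta(g)\beta(h))
\]
with $\alpha\in\aut(\pi_1(M))$, $\beta\in\aut(\pi_1(N))$, and a homomorphism $\delta:\pi_1(M)\to\pi_1(N)$; iterating gives $f^k_*(g,h)=(\alpha^k(g),\,\delta_k(g)\beta^k(h))$ for an appropriate $\delta_k$. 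Because $k$ is divisible by $|\out(\pi_1(M))|$, $\alpha^k$ is inner, say $\alpha^k=c_{g_0}$; post-composing $f^k_*$ with the inner automorphism $c_{(g_0^{-1},1)}$ of $\pi_1(M\times N)$ --- an operation that preserves the free homotopy class of $f^k$ --- yields the homomorphism $(g,h)\mapsto(g,\delta_k(g)\beta^k(h))$. Since $N$ and $M\times N$ are aspherical, one may realize this by a continuous $H:M\times N\to N$, and then $F(x,y):=(x,H(x,y))$ is freely homotopic to $f^k$.

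\smallskip

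\noindent\emph{Step 2: removing fixed points via a nowhere-zero vector field.} Because $\dim M$ is odd, $\chi(M)=0$, so by Hopf's theorem $M$ admits a nowhere-zero vector field $v$. Its flow $\phi_t^v$ satisfies $\phi_t^v(x)\neq x$ for every $x\in M$ and every sufficiently small $t>0$. Define
\[
F_t(x,y):=(\phi_t^v(x),\,H(x,y)),
\]
which is homotopic to $F=F_0$ via $\{F_s\}_{s\in[0,t]}$; for small $t>0$ the map $F_t$ has no fixed points, its first coordinate being already moved off of $x$. Thus $F$ is freely homotopic to a fixed-point-free map, so $\Nie(F)=0$ and every fixed point class of $F$ --- equivalently of $f^k$ --- is inessential, carrying index zero. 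This proves BIPHE$_k$ with $\B=0$.

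\smallskip

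\noindent The central technical step is Step 1: the characteristic nature of $\pi_1(N)$ (from the Fitting-subgroup computation) combined with the finiteness of $|\out(\pi_1(M))|$ (from Mostow rigidity and Farrell--Jones topological rigidity, where the dimension hypothesis on $M$ enters to make the assumption $|\out(\pi_1(M))|\mid k$ meaningful) lets us place $f^k$ in the fibered form $(x,y)\mapsto(x,H(x,y))$. Step 2 is then immediate: the odd-dimensional hypothesis supplies the nowhere-zero vector field on $M$ that sweeps away all fixed points.
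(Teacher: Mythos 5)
Your proof is correct and follows the same overall strategy as the paper --- reduce $f^k$ (after homotopy) to a fibered normal form over $\id_M$ using that $\alpha^k$ is inner, and then exploit $\chi(M)=0$ --- but it diverges at both ends.  In Step~1 you justify the splitting $f_*(g,h)=(\alpha(g),\delta(g)\beta(h))$ by identifying $\{1\}\times\pi_1(N)$ as the Fitting subgroup (hence characteristic); the paper instead invokes Neofytidis's Lemma~\ref{auto for product}, which rests on $\pi_1(M)$ being centerless.  Both are valid, and your Fitting-subgroup computation is essentially a restatement of the same structural fact.  In Step~2 the difference is more substantial: the paper homotopes $f^k$ to $(f_1^k(x),f'_{2,x}(y))$, observes $f_1^k\simeq\id_M$, computes $\Nie(\id_M)=|\chi(M)|=0$, and then applies the product formula for the Nielsen number (\cite[p.88, Corollary 4.3]{J83}) to conclude $\Nie(f^k)=0$; you instead homotope $f^k$ all the way to $(x,H(x,y))$ and then push the first coordinate off the diagonal via the time-$t$ flow of a nowhere-zero vector field supplied by Hopf's theorem, producing a fixed-point-free representative directly.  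Your route is more geometric and avoids the product formula entirely; the one point worth making explicit is that the exclusion $\phi_t^v(x)\neq x$ must hold uniformly in $x$ for small $t>0$, which follows from compactness (e.g.\ replace the flow by $x\mapsto\exp_x(tv(x)/|v(x)|)$ with $t$ below the injectivity radius).  Two small imprecisions do not affect the argument: the finiteness of $\out(\pi_1(M))$ for closed negatively curved $M$ is due to Paulin/Rips--Sela (the paper cites \cite{RS}), not Farrell--Jones topological rigidity; and the odd-dimension hypothesis is used primarily to force $\chi(M)=0$, while finiteness of $\out$ is a separate consequence of negative curvature in dimension $\geq 3$.
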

	
	 Note that in Theorem \ref{main thm 1} $\out(\pi_1(M))$ is a finite group by Rips and Sela \cite{RS} building on ideas of Paulin. Moreover, the conditions ``$M$ is closed" and `` $k>0$ divisible by the order $|\out(\pi_1(M))|$" are necessary, see Example \ref{eg. 3-mfd with boudary} and Example \ref{eg. closed 3-mfd}. In fact, the above Theorem \ref{main thm 1} is a direct corollary of the following result.
	
	\begin{thm}\label{main thm 2}
		Let $f: M\times N\to M\times N$ be a homotopy equivalence, where $M$ is a closed negatively curved Riemannian manifold of odd dimension, and $N$ is a closed nilmanifold. Then for any integer $k>0$ divisible by the order $|\out(\pi_1(M))|$, we have $$\Lef(f^k)=\Min(f^k)=\Nie(f^k)=0,$$
		that is, the iterate $f^k$ can be fixed point free after a homotopy of $f^k$.
	\end{thm}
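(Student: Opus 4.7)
The plan is to reduce $f^k$, up to homotopy, to a map of the form $(m,n) \mapsto (m, F(m,n))$ and then deform it to be fixed-point-free using the flow of a nowhere-vanishing vector field on $M$. Write $G := \pi_1(M)$ and $H := \pi_1(N)$. The key algebraic observation is that $\{e\} \times H$ is the maximal normal nilpotent subgroup of $G \times H$: it is itself normal and nilpotent, while any normal nilpotent $K \triangleleft G \times H$ projects to a normal nilpotent subgroup of $G$, which must be trivial by a standard fact about torsion-free non-elementary word-hyperbolic groups (applicable because $M$ is closed negatively curved with $\dim M \geq 3$). Hence the automorphism $f_*$ of $G \times H$ preserves $\{e\} \times H$ and descends to an automorphism $\alpha \in \aut(G)$ of the quotient.

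By asphericity of $M$, $\alpha$ is realized by a self-homotopy equivalence $\bar f: M \to M$; since $p_M \circ f$ and $\bar f \circ p_M$ induce the same homomorphism on $\pi_1$, they are homotopic, and the homotopy-lifting property of the trivial bundle $M \times N \to M$ deforms $f$ into a fiber-preserving map over $\bar f$. The divisibility hypothesis $|\out(G)| \mid k$ forces $\alpha^k$ to be inner; since $M$ is aspherical, self-maps of $M$ are classified up to free homotopy by their image in $\out(G)$, so $\bar f^k \simeq \id_M$. A further homotopy-lifting therefore produces $\phi \simeq f^k$ of the form
\[
\phi(m,n) = (m,\, F(m,n)).
\]

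Finally, since $\dim M$ is odd, $\chi(M) = 0$, and the closed smooth manifold $M$ admits a nowhere-vanishing vector field $v$; let $\psi_t$ denote its flow. Compactness of $M$ ensures that $\psi_t$ is fixed-point-free for all sufficiently small $t > 0$, so the homotopy $\phi_t(m,n) := (\psi_t(m),\, F(m,n))$ deforms $\phi$ to a fixed-point-free map. Consequently $\Min(f^k) = 0$, and the Lefschetz--Hopf theorem together with the homotopy invariance of the Nielsen number give $\Lef(f^k) = \Nie(f^k) = 0$. The main obstacle is the algebraic rigidity step: identifying $\{e\} \times H$ as the characteristic subgroup preserved by $f_*$ and actually executing the two successive fiber-preserving homotopies, first over $\bar f$ and then over $\id_M$. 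Once $f^k$ is in the canonical form $(m, F(m,n))$, the vector-field argument concludes almost immediately and avoids having to compute any product formula for Lefschetz or Nielsen numbers in the fiber-bundle setup.
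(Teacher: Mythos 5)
Your proof is correct, and it shares the paper's overall scaffolding (decompose $f$ up to homotopy into a fiber-preserving map over a self-map $\bar f$ of $M$, use divisibility of $k$ by $|\out(\pi_1(M))|$ to make the base map of $f^k$ homotopic to $\id_M$) but then departs from it in two meaningful ways. First, where the paper invokes Neofytidis' structure theorem \cite[Lemma 2.4]{N20} for automorphisms of $\Gamma\times G$ to justify the fibration, you instead observe directly that $\{e\}\times\pi_1(N)$ is the unique maximal normal nilpotent subgroup of $\pi_1(M)\times\pi_1(N)$, hence characteristic; this is a clean substitute, valid because a nontrivial normal subgroup of a torsion-free non-elementary hyperbolic group is infinite, hence non-elementary, hence contains a free group of rank two and cannot be nilpotent. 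Second, and more substantially, your final step replaces the paper's use of the product formula for Nielsen numbers and Jiang's theorem $\Min = \Nie$ by a direct geometric construction: once $f^k \simeq \phi$ with $\phi(m,n) = (m, F(m,n))$, the flow $\psi_t$ of a nowhere-vanishing vector field on $M$ (which exists since $\chi(M)=0$) gives the homotopy $\phi_t(m,n) = (\psi_t(m), F(m,n))$ to a fixed-point-free map, from which $\Min(f^k)=0$ follows at once and $\Lef = \Nie = 0$ drop out by homotopy invariance and $\Nie \le \Min$. This is arguably more elementary than the paper's route, since it avoids both the Nielsen product formula and the $\Min=\Nie$ theorem; the tradeoff is that it uses the smooth structure of $M$ and the standard compactness argument that the flow of a nowhere-vanishing field on a closed manifold is fixed-point-free for all sufficiently small $t>0$. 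Both proofs are valid and rest on the same two pillars: $\alpha^k$ inner (from the divisibility hypothesis) and $\chi(M)=0$ (from odd dimension).
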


	
	Note in the above Theorem \ref{main thm 2}, $\Lef(f), \Min(f)$ and $\Nie(f)$ can be non-zero, see Example \ref{eg. closed 3-mfd}. More generally, we consider the product of an aspherical polyhedron with a nilmanifold, and show the following.
	
	\begin{thm}\label{main thm 3}
		Let $f: M\times N\to M\times N$ be a homotopy equivalence, where $M$ is a compact aspherical polyhedron and $N$ is a closed nilmanifold.  If $\pi_1(M)$ is centerless and $\out(\pi_1(M))$ is finite, then for any integer $k>0$ divisible by the order $|\out(\pi_1(M))|$, one of the following holds:
		\begin{enumerate}
			\item $\Lef(f^k)=\Min(f^k)=\Nie(f^k)=0,$ or
			\item there exists a lifting $\widetilde{f^k}$ of $f^k$ in a finite cover $\widetilde{M}\times N$ of $M\times N$, and a homeomorphism $h: \widetilde{M}\times N\to \widetilde{M}\times N$, such that $$h\comp \widetilde{f^k}\comp h^{-1}\simeq f_1\times f_2,$$
			where $f_1:\widetilde{M}\to \widetilde{M}$ and $f_2: N\to N$ are homotopy equivalences.
		\end{enumerate}
	\end{thm}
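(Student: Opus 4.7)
The plan is to bring $\phi:=(f^{k})_{*}$, an automorphism of $G:=\pi_{1}(M\times N)=\pi_{M}\times\pi_{N}$ (well-defined up to inner automorphism), to a canonical skew-product form and then read off the dichotomy from a single integer.

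First I would show that $f_{*}$ must preserve $\pi_{N}$. Writing $f_{*}(a,b)=(\phi_{MM}(a)\phi_{NM}(b),\,\phi_{MN}(a)\phi_{NN}(b))$ in terms of the four component homomorphisms of the product, $\phi_{NM}(\pi_{N})$ is a nilpotent normal subgroup of $\pi_{M}$ (the projection of the nilpotent normal subgroup $f_{*}(\pi_{N})\trianglelefteq G$). The hypothesis that $\pi_{M}$ is centerless with $|\out(\pi_{M})|<\infty$ should force $\pi_{M}$ to have trivial Fitting subgroup, so $\phi_{NM}=0$; running the same argument for $f^{-1}$ gives $f_{*}(\pi_{N})=\pi_{N}$. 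Hence $f_{*}$ descends to $\alpha\in\aut(\pi_{M})$. Since $|\out(\pi_{M})|$ divides $k$, $\alpha^{k}=\inn_{a_{0}}$ for some $a_{0}\in\pi_{M}$, and after composing $(f^{k})_{*}$ with $\inn_{(a_{0}^{-1},1)}\in\inn(G)$ (a valid change of representative within the homotopy class), I may assume $\alpha^{k}=\id_{\pi_{M}}$. Then
\[
\phi(a,b)=(a,\,\xi(a)\,\eta(b)),
\]
where $\eta:=\phi|_{\pi_{N}}\in\aut(\pi_{N})$ and $\xi:\pi_{M}\to\pi_{N}$ is a homomorphism whose image commutes with $\eta(\pi_{N})=\pi_{N}$; hence $\xi$ lands in the finitely generated free abelian group $Z(\pi_{N})$.

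The dichotomy is controlled by $d:=\det(A-I)$, where $A\in\aut(Z(\pi_{N}))$ is the restriction of $\eta$. If $d=0$, then $1$ is an eigenvalue of $\eta_{*}$ on the Lie algebra of $N$, and the Anosov-type theorem for nilmanifolds gives $\Nie(\eta)=\Lef(\eta)=0$. Realising $\phi$ by a skew product $F(m,n)=(m,\zeta(m)\,\eta_{0}(n))$ over $(M,\id_{M})$ and invoking a fiber-type product formula for Nielsen numbers (applicable since the base map is the identity and the bundle is trivial), I obtain $\Nie(f^{k})=\Lef(f^{k})=0$, and Jiang's theorem promotes this to $\Min(f^{k})=0$: alternative (1). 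If $d\neq 0$, then $A-I$ is injective on $Z(\pi_{N})$ with finite cokernel. The composition $\pi_{M}\xrightarrow{\xi} Z(\pi_{N})\twoheadrightarrow Z(\pi_{N})/(A-I)Z(\pi_{N})$ takes values in a finite group, so its kernel $\widetilde{\pi}_{M}\leq\pi_{M}$ has finite index; let $\widetilde{M}\to M$ be the corresponding finite cover. On $\widetilde{\pi}_{M}$, $\xi$ factors uniquely as $(A-I)\circ\nu_{*}$ for a homomorphism $\nu_{*}:\widetilde{\pi}_{M}\to Z(\pi_{N})$, which I realise by a continuous map $\nu:\widetilde{M}\to N$ (possible since $N=K(\pi_{N},1)$). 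A direct $\pi_{1}$-computation combined with asphericity then shows that the self-homeomorphism $h(m,n):=(m,\,\nu(m)\cdot n)$ of $\widetilde{M}\times N$ satisfies
\[
h\circ\widetilde{f^{k}}\circ h^{-1}\simeq \id_{\widetilde{M}}\times f_{2},
\]
where $f_{2}:N\to N$ realises $\eta$: alternative (2).

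The main obstacle is the Fitting-subgroup claim in the first paragraph, namely that $\pi_{M}$ centerless with $|\out(\pi_{M})|<\infty$ has no nontrivial nilpotent normal subgroup. If $K\trianglelefteq\pi_{M}$ were such, $Z(K)$ would be a characteristic nontrivial torsion-free abelian subgroup of $\pi_{M}$; the strategy is to exploit that outer automorphisms of $\pi_{M}$ restrict to outer automorphisms of $Z(K)$ and, using the finiteness of $\out(\pi_{M})$ together with $Z(\pi_{M})=1$, to derive a contradiction. The fibre-formula step in the case $d=0$ is essentially routine, but this algebraic input is the core subtlety of the argument.
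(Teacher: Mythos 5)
The core algebraic input in your first paragraph is wrong, and the fallback strategy you sketch at the end would not rescue it. You assert that $\pi_1(M)$ centerless with $|\out(\pi_1(M))|<\infty$ forces the Fitting subgroup of $\pi_1(M)$ to be trivial, and you propose to prove this by "restricting outer automorphisms to outer automorphisms of $Z(K)$" — but restriction of an automorphism to a characteristic subgroup does not respect the inner/outer dichotomy, and in any case the finiteness of $\out(\pi_1(M))$ plays no role in the actual mechanism. Fortunately the claim you really need is much weaker and follows from centerlessness alone. The nilpotent normal subgroup $\phi_{NM}(\pi_N)$ commutes elementwise with $\alpha(\pi_M)$ (because $(a,1)$ and $(1,b)$ commute in $\pi_M\times\pi_N$ and $\phi$ is a homomorphism), and since $\phi$ is surjective, $\alpha(\pi_M)$ and $\phi_{NM}(\pi_N)$ together generate $\pi_M$. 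Hence the center of the nilpotent group $\phi_{NM}(\pi_N)$ lies in $C(\pi_M)=1$, which forces $\phi_{NM}(\pi_N)=1$. This is precisely the content of Neofytidis's structure lemma (Lemma \ref{auto for product} in the paper), which the paper simply cites; you should do the same or reproduce the short centerlessness argument rather than appeal to an unsupported (and, I believe, false in general) Fitting-subgroup statement.

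Beyond that, your route is structurally the same as the paper's, with one deliberate-looking difference in the bifurcation. The paper splits on whether $\Nie(f^k)=0$, and when $\Nie(f^k)\neq 0$ it deduces $\det(L^k-I)\neq 0$ (full matrix) from its Lemma \ref{periodic point}, then applies Lemma \ref{alpha is inner}. You split on $d=\det(A-I)$ where $A$ is $L^k$ restricted to the center $C(\pi_N)$. Your version is in fact adequate: when $d=0$ the center contributes an eigenvalue $1$, so $\det(L^k_*-I)=0$ on the full Lie algebra and the Anosov/Jiang product argument gives $\Nie(f^k)=0$; when $d\neq 0$, the injectivity of $A-I$ on $C(\pi_N)$ is exactly what the paper's construction of $\Gamma'$ and $\theta=\psi^{-1}\circ\rho$ uses (its Lemma \ref{auto for product on finite index normal subgroup} only needs $\psi$ to be injective on $C(G)$), so alternative (2) goes through. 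Finally, you should be more careful about why $h(m,n)=(m,\nu(m)\cdot n)$ is a \emph{homeomorphism} rather than merely a homotopy equivalence: the expression $\nu(m)\cdot n$ only makes sense because $\nu$ lands in (a quotient of) the center $C(\widehat N)$, which acts on $N=\widehat N/G$ by left translation; the paper's Lemma \ref{h realise a homeomorphism} builds this equivariantly on the universal cover so that the descended map genuinely has this form, and you should either cite that or reproduce the equivariant-extension step.
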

	
	The relation among the fixed point indices of a selfmap and its iterates is an active topic, see \cite{GT25, GJ09}, because it is closely related to the study of dynamical system. For some selfmaps on our product manifold $M\times N$, we have
	
	\begin{thm}\label{iterate Lef infy}
		Let $f: M\times N\to M\times N$ be a homotopy equivalence, where $M$ is a compact aspherical polyhedron and $N$ is a closed nilmanifold.  If $\chi(M)\ne 0$, $\pi_1(M)$ is centerless and the order $|\out(\pi_1(M))|=m$ is finite, then one of the following holds:
		\begin{enumerate}
			\item there exists an integer $\ell>0$ such that for every integer $k>0$ and $\ell|k$, we have $$\Lef(f^k)=\Min(f^k)=\Nie(f^k)=0;$$
			\item there exists an integer $\mathcal{B}_f>0$ such that for every integer $k>0$, we have $|\ind(f^{mk},\F)|\leq\mathcal{B}_f$ for every fixed point class $\F$ of $f^{mk}$. Moreover, $$|\Lef(f^{mk})|\to +\infty \quad and ~\quad\Nie(f^{mk})\to +\infty$$ when $k\to +\infty$.
		\end{enumerate}
	\end{thm}
	
	The paper is organized as follows. In Section \ref{sect 2}, we present some group-theoretical results utilized in this paper. In Section \ref{sect 3}, we introduce several useful lemmas to facilitate our proofs of the main theorems. In Section \ref{sect 4}--\ref{sect 6}, we prove the main theorems. Finally, in Section \ref{sect 7}, we provide some examples.

	\section{Preliminaries of group-theoretical results}\label{sect 2}

	In this section,  let $\Gamma$ be a centerless group (i.e. its center $C(\Gamma)$ is trivial), and let $G$ be a finitely generated nilpotent group. Let
	$$\varphi: \Gamma\times G \rightarrow \Gamma\times G$$
	be an automorphism. In \cite[Lemma 2.4]{N20} Neofytidis showed the following:
	
	\begin{lem}[\cite{N20}]\label{auto for product}
		Let $\Gamma$ be a centerless group (i.e. its center $C(\Gamma)$ is trivial), and let $G$ be a finitely generated nilpotent group. Then any automorphism $\varphi: \Gamma\times G \rightarrow \Gamma\times G$ has a form
		$$\varphi(\gamma,g)=(\alpha(\gamma),\rho(\gamma)L(g)),~(\gamma,g)\in \Gamma\times G,$$
		where $\alpha : \Gamma\rightarrow \Gamma$ and $L : G \rightarrow G$ are automorphisms and $\rho :  \Gamma\rightarrow C(G)$ is a homomorphism into the center of $G$.
	\end{lem}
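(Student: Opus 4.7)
The plan is to decompose $\varphi$ using the direct product structure and then extract rigidity from the two hypotheses (centerlessness of $\Gamma$ and nilpotence of $G$). First I would write the restrictions of $\varphi$ to the two factors as
$$\varphi(\gamma,1)=(\alpha(\gamma),\beta(\gamma)),\qquad \varphi(1,g)=(\mu(g),\nu(g)),$$
so $\alpha:\Gamma\to\Gamma$, $\beta:\Gamma\to G$, $\mu:G\to\Gamma$, $\nu:G\to G$ are group homomorphisms, and
$$\varphi(\gamma,g)=(\alpha(\gamma)\mu(g),\,\beta(\gamma)\nu(g)).$$
Since $(\gamma,1)$ and $(1,g)$ commute in $\Gamma\times G$, their images under $\varphi$ commute too. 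Projecting to $\Gamma$ gives $\alpha(\gamma)\mu(g)=\mu(g)\alpha(\gamma)$ for all $\gamma,g$, so the subgroups $\alpha(\Gamma)$ and $\mu(G)$ commute elementwise.

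The key step is to kill $\mu$. Since $\varphi$ is surjective, the first-coordinate projection of the image is all of $\Gamma$, hence $\Gamma=\alpha(\Gamma)\cdot\mu(G)$. Because $\mu(G)$ commutes with $\alpha(\Gamma)$ and obviously with itself, it commutes with every element of $\Gamma$, i.e.\ $\mu(G)\subseteq C(\Gamma)=\{1\}$. Thus $\mu$ is trivial, which means $\varphi(\{1\}\times G)\subseteq \{1\}\times G$; applying the same argument to $\varphi^{-1}$ gives equality, so $\nu:G\to G$ is an automorphism (rename $\nu=L$), and then a standard injectivity check using that $\nu$ is surjective shows $\alpha$ is also an automorphism of $\Gamma$.

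It remains to show $\beta$ lands in $C(G)$. This is immediate from the commutation $[\varphi(\gamma,1),\varphi(1,g)]=1$: after killing $\mu$ this reads $\beta(\gamma)\nu(g)=\nu(g)\beta(\gamma)$ for all $g\in G$, and since $\nu=L$ is surjective $\beta(\gamma)$ commutes with every element of $G$, so $\beta(\gamma)\in C(G)$. Setting $\rho:=\beta$, the formula $\varphi(\gamma,g)=(\alpha(\gamma),\rho(\gamma)L(g))$ follows from the decomposition above.

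The only place that requires care is the surjectivity argument used to force $\mu(G)\subseteq C(\Gamma)$; this is the single spot where both hypotheses (that $\Gamma$ is centerless and that $\varphi$ is an automorphism rather than merely a homomorphism) are used in an essential way. Nilpotence of $G$ does not enter the proof of this particular lemma, although it is of course crucial for the applications later.
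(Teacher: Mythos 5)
The paper does not prove this lemma; it is cited from Neofytidis \cite{N20}, so there is no in-paper proof to compare against. Your decomposition into the four component homomorphisms $\alpha,\beta,\mu,\nu$ and the use of commutativity of the factors are the right opening moves, and the reduction of $\beta$ to $C(G)$ at the end is correct. The treatment of $\mu$, however, contains a genuine gap.

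You write that $\mu(G)$ commutes with $\alpha(\Gamma)$ and ``obviously with itself,'' and conclude $\mu(G)\subseteq C(\Gamma)$. A subgroup commutes with itself elementwise if and only if it is abelian, and $\mu(G)$, being a homomorphic image of the nilpotent (not necessarily abelian) group $G$, need not be abelian. Concretely, from $\Gamma=\alpha(\Gamma)\mu(G)$ and $[\alpha(\Gamma),\mu(G)]=1$ you get, for $\gamma=a m'$ with $a\in\alpha(\Gamma)$, $m'\in\mu(G)$, that $\mu(g)\gamma=\gamma\mu(g)$ reduces to $\mu(g)m'=m'\mu(g)$; so the argument only shows $\mu(G)\subseteq C(\Gamma)$ under the extra hypothesis that $\mu(G)$ is abelian, which is not free. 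You also explicitly assert that nilpotence of $G$ plays no role in this lemma, which is a tell-tale sign of the gap: in fact it is essential here.

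Two standard ways to close the gap, both using nilpotence of $G$. One: $\mu(G)$ is a nilpotent subgroup of $\Gamma$ that is normal in $\Gamma=\alpha(\Gamma)\mu(G)$ (it is centralized by $\alpha(\Gamma)$ and normalizes itself). If $\mu(G)\neq 1$ then its center $C(\mu(G))$ is nontrivial, and any $z\in C(\mu(G))$ commutes with both $\alpha(\Gamma)$ and $\mu(G)$, hence with all of $\Gamma$, so $z\in C(\Gamma)=1$, a contradiction; thus $\mu(G)=1$. Two: the hypercenter of a direct product is the product of the hypercenters, so $Z_\infty(\Gamma\times G)=Z_\infty(\Gamma)\times Z_\infty(G)=1\times G$ since $\Gamma$ is centerless (hence has trivial hypercenter) and $G$ is nilpotent (hence equals its hypercenter); the hypercenter is characteristic, so $\varphi(1\times G)=1\times G$, i.e.\ $\mu=1$ directly. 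Either repair makes the rest of your argument go through.
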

	
	Now, we suppose that the fixed subgroup of $L|_{C(G)}:C(G)\to C(G)$ is trivial, that is,
	$$\fix(L|_{C(G)}):=\{ g\in C(G)\mid L(g)=g\}=1.$$
	Consider the endomorphism $\psi: C(G)\to C(G)$ defined by
	\begin{equation}\label{eq. psi}
		\psi(g)=L(g)g^{-1}.
	\end{equation}
	Note that
	$\ker\psi=\fix(L|_{C(G)})=1,$ so $\psi$ is injective and hence $\psi^{-1}:\psi(C(G))\to C(G)$ is an isomorphism. Let
	\begin{equation}\label{eq. Normal subgp}
		\Gamma':=\rho^{-1}(\psi(C(G)))\leq \Gamma.
	\end{equation}
	Then $\Gamma'=\Gamma$ if $\psi$ is an automorphism of $C(G)$.  Moreover, we have the following observation.
	\begin{lem}\label{Gamma' finite index in Gamma}
		Let $\psi$ defined in Eq. (\ref{eq. psi}) be an injective homomorphism. Then $\Gamma'$ is a finite-index normal subgroup of $\Gamma$.
	\end{lem}
	\begin{proof}
		Recall that any subgroup of a finitely generated nilpotent group is still finitely generated. So the center $C(G)$ of the finitely generated nilpotent group $G$ is finitely generated and abelian. Moreover, since the endomorphism $\psi: C(G)\to C(G)$ is injective, so the index $[C(G): \psi(C(G))]$ is finite. It follows that $$[\Gamma:\Gamma']\leq [C(G): \psi(C(G))]<+\infty.$$ The normality of $\Gamma'$ is apparent.
	\end{proof}
	
	Now we suppose that the automorphism $\a\in \aut(\Gamma)$ is an inner automorphism of $\Gamma$. Then we can define a homomorphism
	$$\theta=\psi^{-1}\comp \rho : ~\Gamma' \to  C(G)$$
	and an automorphism $h_\#: \Gamma'\times G\to \Gamma'\times G$ as follows,
	\begin{equation}\label{equ. h}
		h_\#(\gamma, g)=(\gamma, ~\theta(\gamma)g)
	\end{equation}
	with $h_\#^{-1}(\gamma, g)=(\gamma, \theta(\gamma)^{-1}g)$.
	Moreover, for any $(\gamma, g)\in \Gamma'\times G$, we have
	\begin{eqnarray}
		h_\#\comp\varphi\comp h_\#^{-1}(\gamma, g)
		&=& h_\#\big(\alpha(\gamma), ~\rho(\gamma)L(\theta(\gamma)^{-1}g)\big)\notag\\
		&=& \big(\alpha(\gamma), ~\theta(\alpha(\gamma)) \rho(\gamma)L(\theta(\gamma)^{-1})L(g)\big)\notag\\
		&=& \big(\alpha(\gamma), ~L(\theta(\gamma)^{-1})\theta(\gamma)\cdot \rho(\gamma)L(g)\big)\notag\\
		&=& \big(\alpha(\gamma), ~\psi(\theta(\gamma)^{-1})\cdot \rho(\gamma)L(g)\big)\notag\\
		&=& \big(\alpha(\gamma), ~\rho(\gamma^{-1})\cdot \rho(\gamma)L(g)\big)\notag\\
		&=& \big(\alpha(\gamma), ~L(g)\big)\in  \Gamma'\times G.\notag
	\end{eqnarray}
	In conclusion, we have proven:
	
	\begin{lem}\label{auto for product on finite index normal subgroup}
		Using the same notations as above. If the automorphism $\a\in \aut(\Gamma)$ is an inner automorphism of $\Gamma$ and $L\in\aut(G)$ satisfies $\fix(L|_{C(G)})=1$, then
		$$h_\# \comp \varphi \comp h_\#^{-1}=\alpha\times L: ~\Gamma'\times G\to \Gamma'\times G,$$
		namely,  ~$h_\#\comp\varphi\comp h_\#^{-1}(\gamma, g)=(\alpha(\gamma), L(g))$ for any $(\gamma, g)\in \Gamma'\times G$. In particular, $\Gamma'=\Gamma$ if the homomorphism $\psi$ in Eq. (\ref{eq. psi}) is an automorphism of $C(G)$.
	\end{lem}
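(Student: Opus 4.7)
The plan is to verify the conjugation identity by a direct chase of definitions, with the only conceptual input being that the inner hypothesis on $\alpha$ forces $\theta\circ\alpha = \theta$. Before starting the computation, I would pause to confirm two well-definedness points. First, $h_\#$ really is an automorphism of $\Gamma'\times G$ with inverse $(\gamma,g)\mapsto (\gamma,\theta(\gamma)^{-1}g)$: the homomorphism property uses that $\theta$ takes values in the center $C(G)$, so $\theta(\gamma_2)$ commutes past $g_1$ in the product $h_\#(\gamma_1,g_1)h_\#(\gamma_2,g_2)$. Second, $\varphi$ actually restricts to $\Gamma'\times G$; since $\alpha$ is inner and $\Gamma'$ is normal, $\alpha(\Gamma')=\Gamma'$, which together with $\rho(\Gamma)\subset C(G)$ suffices.

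Next I would compute $h_\#\circ\varphi\circ h_\#^{-1}(\gamma,g)$ in three steps: $h_\#^{-1}$ produces $(\gamma,\theta(\gamma)^{-1}g)$, then $\varphi$ gives $(\alpha(\gamma),\rho(\gamma)L(\theta(\gamma))^{-1}L(g))$, and finally $h_\#$ yields
\[\bigl(\alpha(\gamma),\ \theta(\alpha(\gamma))\,\rho(\gamma)\,L(\theta(\gamma))^{-1}\,L(g)\bigr).\]
The target is $(\alpha(\gamma),L(g))$, so I need the three central factors in front of $L(g)$ to collapse to the identity. The key identity $\theta(\alpha(\gamma))=\theta(\gamma)$ for $\gamma\in\Gamma'$ is where inner-ness enters: writing $\alpha(\gamma)=a\gamma a^{-1}$, the calculation $\rho(a\gamma a^{-1})=\rho(a)\rho(\gamma)\rho(a)^{-1}=\rho(\gamma)$ uses that $C(G)$ is abelian, and applying $\psi^{-1}$ to both sides gives the claim. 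After substituting $\theta(\gamma)$ for $\theta(\alpha(\gamma))$, all three leading factors lie in $C(G)$ and hence commute; I can rearrange them as $L(\theta(\gamma))^{-1}\theta(\gamma)\cdot\rho(\gamma)$, recognize the first part as $\psi(\theta(\gamma)^{-1})$, and use $\psi\circ\theta=\rho$ on $\Gamma'$ to rewrite this as $\rho(\gamma)^{-1}\rho(\gamma)=1$. Here I also implicitly use that $\psi|_{C(G)}$ is a homomorphism, which again holds by commutativity of $C(G)$.

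The supplementary assertion that $\Gamma'=\Gamma$ when $\psi$ is surjective on $C(G)$ is then immediate from $\Gamma'=\rho^{-1}(\psi(C(G)))=\rho^{-1}(C(G))=\Gamma$. I do not foresee a serious technical obstacle; the only subtle point is noticing that the inner hypothesis on $\alpha$ is used precisely to force $\rho\circ\alpha=\rho$, which is what makes $\theta\circ\alpha=\theta$ (a statement about a homomorphism defined only on $\Gamma'$) hold for an automorphism $\alpha$ of the larger group $\Gamma$. Without this, one would lose control of the twisting term $\theta(\alpha(\gamma))$, and the conjugation would fail to untwist $\varphi$ into a direct product.
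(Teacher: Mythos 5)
Your proof is correct and follows essentially the same computation as the paper: conjugate $\varphi$ by $h_\#$, use centrality of the factors to commute, recognize $L(\theta(\gamma)^{-1})\theta(\gamma)=\psi(\theta(\gamma)^{-1})$, and cancel with $\rho(\gamma)$ via $\psi\circ\theta=\rho$. You are a bit more explicit than the paper in justifying the key step $\theta\circ\alpha=\theta$ on $\Gamma'$ (the paper silently substitutes $\theta(\gamma)$ for $\theta(\alpha(\gamma))$ in the chain of equalities), but this is the same argument, just spelled out.
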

	
	\section{Several useful lemmas}\label{sect 3}
	
	In this section, let $M$ be a compact connected aspherical polyhedron with centerless fundamental group $\pi_1(M)=\Gamma$, and let $N=\widehat{N}/G$ be a closed nilmanifold with fundamental group $\pi_1(N)=G$, where $\widehat{N}$ is a simply connected nilpotent Lie group with its center $C(\widehat{N})$ a Euclidean space and $G \subset \widehat{N}$ a cocompact lattice.
	
	Note that all the spaces $M, N$ and $M\times N$ are aspherical and $C(G)=G\cap C(\widehat{N})$. Moreover,  the product $M\times N$ satisfies the following mild condition: it is a compact connected polyhedron without local separating points, but it is not a surface (closed or with boundary) of negative Euler characteristic. Then by Jiang's deep result \cite[Main Theorem]{J80} or \cite[p.22, Theorem 6.3]{J83}, we have the following.
	
	\begin{lem}\label{Jiang's deep thm Mf=Nf}
		$\Min(f)=\Nie(f)$ for any selfmap $f$ of $M\times N$.
	\end{lem}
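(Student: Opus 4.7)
The plan is to invoke Jiang's theorem quoted in the introduction (\cite[Main Theorem]{J80}, or \cite[p.22, Theorem~6.3]{J83}) applied to $X=M\times N$; the lemma then reduces to verifying its three hypotheses, which is precisely the assertion of the paragraph preceding the statement: (a) $X$ is a compact connected polyhedron, (b) $X$ has no local separating points, and (c) $X$ is not a surface of negative Euler characteristic.

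Condition (a) is immediate, since the product of two compact connected triangulable spaces is again compact, connected, and polyhedral (after refining the product triangulation). For (b), I would use the standing setup that $N$ is a closed manifold of positive dimension. Given $p=(x_0,y_0)\in X$, I would choose a basic neighborhood $U\times V$ of $p$ with $U$ a connected neighborhood of $x_0$ in $M$ and $V$ a coordinate ball around $y_0$ in $N$, and decompose $U\setminus\{x_0\}=\bigsqcup_i U_i$ into its connected components. Then
\[
(U\times V)\setminus\{p\}=\bigl(U\times(V\setminus\{y_0\})\bigr)\cup\bigcup_i\bigl(U_i\times V\bigr),
\]
and each $U_i\times V$ is connected and meets $U\times(V\setminus\{y_0\})$. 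When $\dim N\geq 2$, the piece $U\times(V\setminus\{y_0\})$ is itself connected, so the whole union is connected; when $\dim N=1$, the set $V\setminus\{y_0\}$ splits into two components $V_\pm$, but each $U_i\times V$ meets both $U\times V_+$ and $U\times V_-$, keeping the union connected. Hence $p$ is not a local separating point.

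For (c), I would argue by dimension. If $X$ were a surface, then $\dim M+\dim N=2$. If $\dim M=0$, then $M$ is a point and $N$ is the unique closed $2$-nilmanifold $T^2$, giving $\chi(X)=0$. If $\dim M=\dim N=1$, then $N=S^1$ and $M\times S^1$ being a surface forces $M$ to be a closed connected $1$-manifold, so $M=S^1$; but then $\pi_1(M)=\Z$ has nontrivial center, contradicting the centerless hypothesis. The remaining case $\dim N=0$ is excluded by the implicit assumption that $N$ is a nondegenerate nilmanifold. In every admissible situation, $X$ is not a surface of negative Euler characteristic.

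With (a)--(c) in place, Jiang's theorem yields $\Min(f)=\Nie(f)$ for every selfmap $f\colon M\times N\to M\times N$. The step requiring the most care is the no-local-separation argument in (b) when $\dim N=1$, where the punctured interval is disconnected and one must verify that the product-slice pieces still glue the remaining parts together; (a) and (c) are then routine.
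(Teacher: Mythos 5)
Your proof is correct and follows the same route as the paper: it invokes Jiang's theorem \cite[Main Theorem]{J80} (\cite[p.22, Theorem~6.3]{J83}) for $X=M\times N$, for which the paper merely asserts the three hypotheses without verification, while you supply the verifications explicitly. Two degenerate cases worth noting for completeness are $M=[0,1]$ in (c), where $M\times S^1$ is an annulus with $\chi=0$ (so still not of negative Euler characteristic), and $M$ a single point with $N=S^1$ in (b), where there are no components $U_i$ and the no-local-separation hypothesis in fact fails, though the Wecken equality $\Min=\Nie$ for the circle is classical anyway.
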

	
	Now, let $f : M \times N \rightarrow M \times N$ be a homotopy equivalence. Then by the above Lemma \ref{auto for product}, $f$ induces an automorphism $f_\#$ on the fundamental group:
	\begin{eqnarray}\label{eq. aut on fundamental group}
		f_\#: \Gamma\times G &\to& \Gamma\times G\\
		(\gamma, g)&\mapsto& (\a(\gamma), \rho(\gamma)L(g)),\notag
	\end{eqnarray}
	where $\alpha : \Gamma\rightarrow \Gamma$ and $L : G \rightarrow G$ are automorphisms and $\rho :  \Gamma\rightarrow C(G)$ is a homomorphism into the center of $G$.
	Since all of $M, N$ and $M\times N$ are aspherical, $f$ can be homotopied to a
	fiber-preserving map (still denoted by $f$) as follows:
	\begin{equation}\label{fibration map}
		f(x,y) = (f_1 (x),f_2 (x,y)),
	\end{equation}
	where $f_1 : M \rightarrow M$ induces $f_{1\#} = \alpha\in \aut(\Gamma)$, and for any $x \in M$,
	$$f_{2,x}(\cdot):=f_2(x,\cdot) : N \rightarrow N$$
	induces $f_{2,x \#} = L\in \aut(G).$
	
	Note that every selfmap of a nilmanifold has a linearization by tori decomposition, which can be represented by a matrix. Let $\mathcal{L}$ be the linearization of $f_{2,x}:N\to N$.
	Then the matrix $\mathcal{L}$ can also be produced as the derivative $\mathcal{L}': \mathfrak{G}\to \mathfrak{G}$ on the Lie algebra $\mathfrak{G}$ of $G$ of the automorphism
	$L: G \to G$.  Although the matrices $\mathcal{L}$ and $\mathcal{L}'$ may in fact be slightly different, the Nielsen calculations
	involving $\det(\mathcal{L}-I)=\det(\mathcal{L}'-I)$ will not be changed. (See \cite[p.91]{K05}). Therefore, to simplify the notation,  we will still use $\mathcal{L}$ to denote $\mathcal{L}'$ in this paper.
	

	\begin{lem}\label{periodic point}
		Let $f : M \times N \rightarrow M \times N$ be a homotopy equivalence, and let $\mathcal{L}$ be defined as above. If the determinant $\det(\mathcal{L}^k-I)= 0$ for some integer $k>0$, then $$\Lef(f^k)=\Min(f^k)=\Nie(f^k)=0,$$
		that is, the iterate $f^k$ can be fixed point free after a homotopy of $f^k$.
	\end{lem}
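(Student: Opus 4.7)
The strategy is to put $f^k$ in fiber-preserving form over the trivial fibration $M\times N\to M$ and then exploit the fact that the fiber $N$ is a nilmanifold to force every fiber fixed-point-class index to vanish.

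First, $(f_\#)^k$ has the same triangular form as in Lemma~\ref{auto for product}: iterating the recipe $(\gamma,g)\mapsto(\a(\gamma),\rho(\gamma)L(g))$ yields $(f_\#)^k(\gamma,g)=(\a^k(\gamma),\rho_k(\gamma)L^k(g))$ for some homomorphism $\rho_k:\Gamma\to C(G)$ (one uses that $L$, being an automorphism, preserves $C(G)$). So the fiber-preserving construction leading to Equation~(\ref{fibration map}) applies verbatim to $f^k$, giving
\[
f^k(x,y)=(f_1^k(x),\,h_2(x,y)),
\]
where for every $x\in M$ the fiber map $h_{2,x}:=h_2(x,\cdot):N\to N$ induces $L^k$ on $\pi_1(N)=G$.

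For the Lefschetz number, since $N$ is aspherical, any selfmap of $N$ inducing $L^k$ on $\pi_1(N)$ induces the same linear map on $H^*(N;\Q)$; in particular $h_{2,x}^*$ is independent of $x$. Using the K\"unneth isomorphism $H^*(M\times N;\Q)\cong H^*(M;\Q)\otimes H^*(N;\Q)$ and computing traces on the associated graded of the Serre filtration for the trivial fibration, one obtains
\[
\Lef(f^k)=\Lef(f_1^k)\cdot\Lef(h_{2,x_0}).
\]
By the classical Lefschetz formula for nilmanifold selfmaps, $\Lef(h_{2,x_0})=\pm\det(L^k-I)=0$, hence $\Lef(f^k)=0$.

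For the Nielsen number, observe that every fixed point of $f^k$ is a pair $(x,y)$ with $x\in\fix(f_1^k)$ and $y\in\fix(h_{2,x})$, so every fixed-point class $\F$ of $f^k$ projects to a class $\bar\F$ of $f_1^k$ and restricts, on each fiber over $x\in\bar\F$, to a class of $h_{2,x}$. By the same nilmanifold formula, $\Nie(h_{2,x})=|\Lef(h_{2,x})|=0$, and since nilmanifolds are Jiang spaces, every fixed-point class of $h_{2,x}$ has zero index. Multiplicativity of fixed-point indices in the fiber-preserving setting then forces $\ind(f^k,\F)=0$ for every $\F$, whence $\Nie(f^k)=0$. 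Finally, Lemma~\ref{Jiang's deep thm Mf=Nf} gives $\Min(f^k)=\Nie(f^k)=0$.

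The main obstacle is the last multiplicativity step: rigorously relating the index of $f^k$ at a fixed-point class to the indices of $f_1^k$ and $h_{2,x}$. One way to sidestep this uses the nilpotent structure directly: the hypothesis $\det(L^k-I)=0$ produces a nonzero $v\in\mathrm{Lie}(\widehat{N})$ fixed by $L^k_*$, and the homotopy $H_t(x,y)=(f_1^k(x),\,h_2(x,y)\cdot\exp(tv))$ (where $\cdot$ is the well-defined right translation on $N=G\backslash\widehat{N}$ by $\exp(tv)\in\widehat{N}$) can be shown to be fixed-point-free for a generic small $t\neq 0$, yielding $\Min(f^k)=0$ directly.
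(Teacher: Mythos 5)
Your overall plan — put $f^k$ in fiber-preserving form over $M\times N\to M$, use that each fiber map on the nilmanifold $N$ induces $L^k$ and hence has Lefschetz number $\pm\det(L^k-I)=0$, and that nilmanifolds are Jiang spaces — is the same as the paper's. Where the paper differs, and where your version has a real gap, is the step from fiber-level vanishing to $\Nie(f^k)=0$: the paper invokes Jiang's ``Product formula for the Nielsen number'' \cite[p.88, Corollary~4.3]{J83} directly, which gives $\Nie(f^k)=\Nie(f_1^k)\cdot\Nie(f'_{2,x})=0$ with no need to touch individual indices. Your argument instead tries to deduce $\ind(f^k,\F)=0$ from the vanishing of fiber indices via ``multiplicativity of fixed-point indices in the fiber-preserving setting,'' which you yourself flag as the main obstacle; that index product formula (\cite[p.85, Theorem~3.3]{J83}) carries nontrivial hypotheses on the restriction of the fibration to a fixed-point class, and you have not verified them, so as written this is a genuine gap rather than a cosmetic imprecision. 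The correct remedy is simply to replace that paragraph by the citation of Corollary~4.3 as in the paper.

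Two further remarks. First, your direct computation $\Lef(f^k)=\Lef(f_1^k)\cdot\Lef(h_{2,x_0})$ via K\"unneth is fine and a legitimate alternative, though unnecessary: the paper obtains $\Lef(f^k)=0$ for free once $\Min(f^k)=\Nie(f^k)=0$ via Lemma~\ref{Jiang's deep thm Mf=Nf} and Lefschetz--Hopf. Second, your proposed ``sidestep'' via a nonzero $v\in\mathrm{Lie}(\widehat N)$ fixed by $L^k_*$ and the right-translation homotopy $H_t$ is only a sketch: the fiber maps $h_{2,x}$ agree with the algebraic automorphism of $\widehat N$ only up to homotopy, so $R_{\exp(tv)}$ does not literally intertwine with them, and the claimed fixed-point-freeness for small generic $t$ is not justified. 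This idea would need substantial work (essentially reproducing McCord-type arguments for nilmanifolds) before it could replace the product formula.
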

	
	\begin{proof}
		By Eq. (\ref{fibration map}), after a homotopy, we can assume that the homotopy equivalence $f^k:  M \times N \rightarrow M \times N$ is a fiber-preserving map as follows:
		$$f^k(x,y)=(f^k_1(x),f'_{2,x}(y)),$$
		where the map $f'_{2,x}: N\to N$ induces $(f'_{2,x})_\#=L^k: G\to G$ for every $x\in M$.  Recall that the nilmanifold $N$ is a Jiang space, the Nielsen number
		$$\Nie(f'_{2,x})=|\Lef(f'_{2,x})|=|\det(\mathcal{L}^k-I)|= 0.$$
		Hence, by the ``Product formula for the Nielsen number'' \cite[p.88, Corollary 4.3]{J83}, we have
		$$\Nie(f^k)=\Nie(f^k_1)\cdot \Nie(f'_{2,x})= 0.$$
		Therefore, by Lemma \ref{Jiang's deep thm Mf=Nf}, we have
		$$\Lef(f^k)=\Min(f^k)=\Nie(f^k)=0,$$
		that is, the iterate $f^k$ can be fixed point free after a homotopy of $f^k$.
	\end{proof}
	
	As in Eq. (\ref{eq. Normal subgp}), let
	$$\Gamma':=\rho^{-1}(\psi(C(G)))\lhd \Gamma= \pi_1(M)$$
	and let $\widetilde{M}$ be the finite cover of $M$ with $\pi_1(\widetilde{M})=\Gamma'.$ Denote by $\widehat{M}$ the universal cover of $M$. The fundamental groups $\Gamma $ and $\Gamma'$ act on $\widehat{M}$ cocompactly by deck transformations, and we identify $\Gamma$ with an orbit of a base-point in $\widehat{M}$. Recall that $G=\pi_1(N)$. For the automorphism $h_\#: \Gamma'\times G\to \Gamma'\times G$ given in Eq. (\ref{equ. h}), we can show:
	
	\begin{lem}\label{h realise a homeomorphism}
		There exists a homeomorphism $h: \widetilde{M}\times N\to   \widetilde{M}\times N$ inducing the automorphism $h_\#(\gamma, g)=(\gamma, ~\theta(\gamma)g)$ given in Eq. (\ref{equ. h}).
	\end{lem}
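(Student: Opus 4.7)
The plan is to lift to universal covers, build a suitably equivariant homeomorphism there, and descend. Write $\widetilde{M}=\widehat{M}/\Gamma'$, so the universal cover of $\widetilde{M}\times N$ is $\widehat{M}\times\widehat{N}$ with deck group $\Gamma'\times G$ acting by $(\gamma,g)\cdot(\widehat{x},\widehat{y})=(\gamma\widehat{x},g\widehat{y})$. The center $C(\widehat{N})$ of the simply connected nilpotent Lie group $\widehat{N}$ is a connected simply connected abelian Lie subgroup, hence isomorphic to $\Real^c$ for some $c$, and by Mal'cev's theorem $C(G)=G\cap C(\widehat{N})$ is a cocompact lattice in it. Consequently, $T^c:=C(\widehat{N})/C(G)$ is a $c$-dimensional torus and an Eilenberg--MacLane space $K(C(G),1)$.

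The first and crucial step is to realize the homomorphism $\theta:\Gamma'\to C(G)=\pi_1(T^c)$ by a continuous map $\bar{\theta}:\widetilde{M}\to T^c$; such a $\bar{\theta}$ exists because $\widetilde{M}$ is a polyhedron (in fact aspherical) and $T^c$ is an Eilenberg--MacLane space. Equivalently, $\bar\theta$ lifts to a continuous $\Gamma'$-equivariant map
$$\widehat{\theta}:\widehat{M}\to C(\widehat{N}),\qquad \widehat{\theta}(\gamma\widehat{x})=\theta(\gamma)\widehat{\theta}(\widehat{x})\text{ for all }\gamma\in\Gamma'.$$

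With $\widehat\theta$ in hand, I define the homeomorphism of the universal cover
$$\widehat{h}:\widehat{M}\times\widehat{N}\to\widehat{M}\times\widehat{N},\qquad \widehat{h}(\widehat{x},\widehat{y})=(\widehat{x},\widehat{\theta}(\widehat{x})\widehat{y}),$$
with continuous inverse $(\widehat{x},\widehat{y})\mapsto(\widehat{x},\widehat{\theta}(\widehat{x})^{-1}\widehat{y})$. A direct computation then yields the twisted equivariance
$$\widehat{h}\bigl((\gamma,g)\cdot(\widehat{x},\widehat{y})\bigr)=(\gamma\widehat{x},\theta(\gamma)\widehat{\theta}(\widehat{x})g\widehat{y})=\bigl(\gamma,\theta(\gamma)g\bigr)\cdot\widehat{h}(\widehat{x},\widehat{y}),$$
where the second equality uses that $\widehat{\theta}(\widehat{x})\in C(\widehat{N})$ commutes with $g\in G$. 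Hence $\widehat{h}$ is equivariant for the automorphism $h_\#$ of the deck group, so it descends to a self-homeomorphism $h$ of $\widetilde{M}\times N$ inducing $h_\#$ on $\pi_1$, as required.

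The only non-routine ingredient is the realization step producing $\bar\theta$ (equivalently $\widehat\theta$), and it works precisely because $\theta$ takes values in the \emph{center} $C(G)$: centrality is simultaneously what guarantees that $T^c$ is a genuine torus and hence aspherical (so the $\pi_1$-homomorphism $\theta$ lifts to an actual continuous map without higher obstructions), and what makes $\widehat{\theta}(\widehat{x})$ commute with $g$ in the equivariance check above. Were the image of $\theta$ not central, neither of these would go through.
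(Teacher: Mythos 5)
Your proof is correct and follows essentially the same strategy as the paper's, which credits Gogolev--Lafont: realize $\theta$ by a continuous $\Gamma'$-equivariant map from $\widehat{M}$ into the nilpotent Lie group, then define the shear $\widehat h(\widehat x,\widehat y)=(\widehat x, \widehat\theta(\widehat x)\widehat y)$ and check that it descends. The one place where you are noticeably more careful than the paper's write-up is in forcing the target of $\widehat\theta$ to be $C(\widehat N)\cong\Real^c$ (equivalently, realizing $\theta$ by a map $\widetilde M\to T^c=C(\widehat N)/C(G)$) rather than merely $\widehat N$. This extra precision matters: the formula $h(x,y)=(x,\theta(x)y)$ is only well-defined on $\widetilde M\times N$ because $T^c$ acts on $N=\widehat N/G$ by left translation, which in turn requires the image to be central; and, as you correctly observe, the twisted equivariance $\widehat h\bigl((\gamma,g)\cdot(\widehat x,\widehat y)\bigr)=h_\#(\gamma,g)\cdot\widehat h(\widehat x,\widehat y)$ uses precisely that $\widehat\theta(\widehat x)$ commutes with $g\in G$. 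The paper constructs the extension into $\widehat N$ by equivariant obstruction theory and then asserts the descent, leaving these two uses of centrality implicit; your route via the torus $T^c$ as a $K(C(G),1)$ makes them explicit at no extra cost. Your appeal to Mal$'$cev theory for $C(G)=G\cap C(\widehat N)$ being a cocompact lattice in the connected simply connected abelian group $C(\widehat N)$ is the standard rationality-of-the-center fact and is fine.
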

	
	Although Lemma \ref{h realise a homeomorphism} can be directly derived from \cite[Lemma 6.3 \& Lemma 6.6]{GL16}, for the sake of readability and completeness, we provide the detailed proof as follows.
	
	\begin{proof}
		Triangulate the universal cover $\widehat{M}$ in an equivariant way so that $\Gamma'\subset \widehat{M}$ belongs to the $0$-skeleton. Extend  the homomorphism
		$$\theta=\psi^{-1}\comp \rho: ~\Gamma'\rightarrow C(G)\subset \widehat N$$
		to the $0$-skeleton of  $\widehat{M}$ equivariantly in an arbitrary way. Recall that the universal cover $\widehat{N}$ is connected, simply connected and aspherical. Therefore, we can extend $\theta$ equivariantly to all skeleta of $\widehat{M}$ by induction on dimension, and we approximate the resulting map by an equivariant map (still denoted by $\theta$) $\theta: \widehat M\to \widehat{N}$, that is,
		$$\theta(\gamma \hat x)=\theta(\gamma)\theta(\hat x), ~~\forall \gamma\in \Gamma', ~\hat x\in\widehat M.$$
		It is easy to see (by using charts) that this can be done without changing the values on $\Gamma' \subset \widehat{M}$.  Note that $\theta(\gamma)\in C(G)$ for every $\gamma\in \Gamma'$. Therefore, the above equivariant map $\theta: \widehat M\to \widehat{N}$ descends to a map
		$$\theta: \widetilde{M} =\widehat{M}/\Gamma'\longrightarrow N=\widehat{N}/G.$$ Then the posited homeomorphism $h: \widetilde{M}\times N\to   \widetilde{M}\times N$ can be defined as follows:
		$$h(x,y)=(x,\theta(x)y).$$
	\end{proof}

	\begin{lem}\label{alpha is inner}
		Let $f : M \times N \rightarrow M \times N$ be a homotopy equivalence with a fixed point, and let $\a$ and $\mathcal{L}$ be defined as above. If $\a$ is an inner automorphism of $\Gamma$ and the determinant $\det(\mathcal{L}-I)\neq 0$, then there exists a finite cover $\widetilde{M}\times N$ of $M\times N$, and exists a lifting $\tilde{f}$ of $f$ in $\widetilde{M}\times N$ such that the following diagram is homotopy commutative:
		$$
		\xymatrix{
			\widetilde{M}\times N \ar[r]^{\tilde{f}}\ar[d]^{\cong}_{h} & \widetilde{M}\times N \ar[d]^{h}_{\cong}\\
			\widetilde{M}\times N \ar[r]^{f_1\times f_2} & \widetilde{M}\times N
		}
		$$
		
		where $h$ is a homeomorphism, and $f_1:\widetilde{M}\to \widetilde{M}$,  $f_2: N\to N$ are homotopy equivalences. In particular, if $|\det(\mathcal{L}-I)|=1$, then $\widetilde{M}=M$, and $f=\tilde f$ with
		$$\Lef(f)=\Lef(f_1)\cdot \Lef(f_2), \quad \Nie(f)=\Nie(f_1)\cdot \Nie(f_2).$$
	\end{lem}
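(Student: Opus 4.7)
The plan is to assemble the machinery already built in Sections 2 and 3 (Lemmas \ref{auto for product}, \ref{auto for product on finite index normal subgroup}, and \ref{h realise a homeomorphism}) and then transfer the group-theoretic conjugation statement into a topological statement by asphericity.

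First I would fix notation: since $f$ has a fixed point, the induced map $\varphi = f_\#: \Gamma \times G \to \Gamma \times G$ is a well-defined automorphism of the form $\varphi(\gamma,g) = (\alpha(\gamma), \rho(\gamma)L(g))$ by Lemma \ref{auto for product}. The hypothesis $\det(L-I) \neq 0$ means the integer matrix $L-I$ is injective, which is equivalent to $\fix(L)=1$. Combined with the assumption that $\alpha$ is inner, the hypotheses of Lemma \ref{auto for product on finite index normal subgroup} are met, so with $\Gamma' = \rho^{-1}(\psi(C(G)))$ a finite-index normal subgroup of $\Gamma$, we have on $\Gamma' \times G$ the identity $h_\# \circ \varphi \circ h_\#^{-1} = \alpha \times L$, where $h_\#(\gamma,g) = (\gamma, \theta(\gamma)g)$ as in Eq. (\ref{equ. h}). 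Normality of $\Gamma'$ together with innerness of $\alpha$ ensures that $\alpha$ preserves $\Gamma'$, so $\varphi$ restricts to an automorphism of $\Gamma' \times G$.

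Next I would realize everything topologically. Let $\widetilde{M} \to M$ be the finite cover with $\pi_1(\widetilde{M}) = \Gamma'$. Because $\varphi$ preserves $\Gamma' \times G$, the map $f$ lifts to a selfmap $\tilde f: \widetilde{M} \times N \to \widetilde{M} \times N$ inducing $\varphi|_{\Gamma'\times G}$ on fundamental groups. By Lemma \ref{h realise a homeomorphism}, $h_\#$ is realized by a self-homeomorphism $h$ of $\widetilde{M} \times N$. Now pick any selfmaps $f_1: \widetilde{M} \to \widetilde{M}$ and $f_2: N \to N$ that induce $\alpha|_{\Gamma'}$ and $L$ respectively; these exist since both spaces are aspherical (take, for instance, standard representatives via Eq. (\ref{fibration map})), and both are homotopy equivalences since $\alpha|_{\Gamma'}$ and $L$ are isomorphisms. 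Then $h \circ \tilde f \circ h^{-1}$ and $f_1 \times f_2$ are two selfmaps of the aspherical space $\widetilde{M} \times N$ inducing the same automorphism $\alpha \times L$ on $\pi_1$, hence they are homotopic. This gives the commutative diagram.

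For the special case $|\det(L-I)| = 1$, observe that $\psi: C(G) \to C(G)$ given by $\psi(g) = L(g)g^{-1}$ is represented on the finitely generated abelian group $C(G)$ by (a block of) the matrix $L-I$, so $|\det(L-I)| = 1$ forces $\psi$ to be an automorphism of $C(G)$. Then $\psi(C(G)) = C(G)$, so $\Gamma' = \rho^{-1}(C(G)) = \Gamma$, giving $\widetilde{M} = M$ and $\tilde f = f$. Homotopy invariance of $\Lef$ and $\Nie$ together with the fact that $h$ is a homeomorphism yield $\Lef(f) = \Lef(f_1 \times f_2)$ and $\Nie(f) = \Nie(f_1 \times f_2)$; the standard product formula $\Lef(f_1 \times f_2) = \Lef(f_1)\Lef(f_2)$ is immediate from the Künneth theorem, and the product formula $\Nie(f_1 \times f_2) = \Nie(f_1)\Nie(f_2)$ applies because the nilmanifold $N$ is a Jiang space (cf. \cite[p.88, Corollary 4.3]{J83}).

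The main obstacle is the bookkeeping in Step 2: I must confirm that $\varphi$ genuinely restricts to an automorphism of $\Gamma' \times G$ (so that the lift $\tilde f$ exists) and that the homeomorphism $h$ produced by Lemma \ref{h realise a homeomorphism} induces exactly $h_\#$ on the correct fundamental group $\Gamma' \times G$, not some modification of it. Once that coherence between the group-level conjugation on $\Gamma' \times G$ and the topological conjugation on $\widetilde{M} \times N$ is verified, asphericity turns algebraic equalities into homotopies and the rest is formal.
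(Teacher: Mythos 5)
Your proof is correct and follows essentially the same route as the paper: both reduce to Lemma \ref{auto for product on finite index normal subgroup} to conjugate $f_\#$ on $\Gamma'\times G$ into the product form $\alpha\times L$, realize $h_\#$ by a homeomorphism via Lemma \ref{h realise a homeomorphism}, lift $f$ to the cover corresponding to $\Gamma'\times G$ using normality of $\Gamma'$ and innerness of $\alpha$, invoke asphericity to convert the conjugation of automorphisms into a homotopy, and in the unimodular case observe $\psi$ is an automorphism so $\Gamma'=\Gamma$ and apply the product formulas. The only cosmetic difference is that the paper fixes a base point $(\tilde{x},y)$ explicitly to keep track of the fundamental groups, which is the coherence check you flagged as the remaining bookkeeping.
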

	
	\begin{proof}
		Suppose $(x,y)\in M\times N$ is a fixed point of $f$, $\Gamma=\pi_1(M, x)$ and $G=\pi_1(N, y)$ throughout the proof. Then
		the automorphism $f_\#$ of $\Gamma\times G$ induced by $f$ has the following form as in Eq. (\ref{eq. aut on fundamental group}):
		$$f_\#(\gamma, g)=(\alpha(\gamma),\rho(\gamma)L(g)),~\mathrm{for}~(\gamma,g)\in \Gamma\times G,$$
		where $\alpha : \Gamma\rightarrow \Gamma$ and $L : G \rightarrow G$ are automorphisms and $\rho :  \Gamma\rightarrow C(G)$ is a homomorphism into the center of $G$. Since the determinant $\det(\mathcal{L}-I)\neq 0$, the fixed subgroup $\fix(L|_{C(G)})=1$, by Lemma \ref{auto for product on finite index normal subgroup}, there exists a finite-index normal subgroup $\Gamma'$ of $\Gamma$, and an automorphism $h_\#: \Gamma'\times G\to \Gamma'\times G$ such that
		\begin{equation}\label{conjugate product on pi}
			h_\# \comp f_\# \comp h_\#^{-1}=\alpha\times L: ~\Gamma'\times G\to \Gamma'\times G.
		\end{equation}
		
		Let $\widetilde{M}$ be the finite cover of $M$ corresponding to $\Gamma'=\pi_1(\widetilde{M}, \tilde x)$ for $\tilde x\in p^{-1}(x)$, and let $p\times\mathrm{id}:\widetilde{M}\times N\rightarrow M\times N$ be the covering map corresponding to the subgroup $\Gamma'\times G$ of $\Gamma\times G$. Recall that $\Gamma'\leq \Gamma $ is normal and $\a\in \inn(\Gamma)$, so $f_\#(\Gamma'\times G)\leq \Gamma'\times G$. Therefore, we have a commutative diagram
		$$
		\xymatrix{
			\widetilde{M}\times N \ar[r]^{\tilde{f}}\ar[d]_{p\times \mathrm{id}} & \widetilde{M}\times N \ar[d]^{p\times \mathrm{id}}\\
			M\times N \ar[r]^{f} & M\times N
		}
		$$
		where $\tilde{f}$ is a lifting of $f$.  Since all the spaces above are aspherical, there are homotopy equivalences
		\begin{eqnarray}
			f_1: (\widetilde{M}, \tilde{x}) &\to& (\widetilde{M}, \tilde{x}),\notag\\
			f_2: (N, y)&\to& (N, y), \notag\\
			h: (\widetilde{M}\times N, (\tilde{x},y))&\to& (\widetilde{M}\times N, (\tilde{x},y))\notag
		\end{eqnarray}
		inducing $f_{1\#}=\a|_{\Gamma'}, ~f_{2\#}=L$ and $h_\#$ respectively. Note that $h$  can be realized as a homeomorphism by Lemma \ref{h realise a homeomorphism}. Then on the fundamental group $\Gamma'\times G$, we have
		\begin{eqnarray}\nonumber
			(h\comp \tilde{f})_\# = h_\# \comp f_\# =(\alpha\times L)\comp h_\#=((f_1\times f_2)\comp h)_\#,
		\end{eqnarray}
		where the second ``=" holds by Eq. (\ref{conjugate product on pi}).
		Therefore, $h\comp \tilde{f}$ is homotopic to $(f_1\times f_2)\comp h$.
		
		In particular, if $|\det(\mathcal{L}-I)|=1$, then $(\mathcal{L}-I)^{-1}$ is an integer matrix and hence the homomorphism $\psi$ in Eq. (\ref{eq. psi}) is an automorphism. It follows $\Gamma'=\Gamma$ from Lemma \ref{auto for product on finite index normal subgroup}, and hence $\widetilde{M}=M$ and $\tilde f=f$. Therefore, $f$  is conjugate to $f_1\times f_2$ , by the ``Homotopy type invariance of the Nielsen number" \cite[p.21, Theorem 5.4]{J83}, ``Product formula for the Lefschetz number" \cite[p.85, Theorem 3.2]{J83} and ``Product formula for the Nielsen number" \cite[p.88, Theorem 4.1]{J83}, we have $\Lef(f)=\Lef(f_1)\cdot \Lef(f_2)$ and $\Nie(f)=\Nie(f_1)\cdot \Nie(f_2).$
	\end{proof}
	
	
	\section{Proof of Theorem \ref{main thm 0}}\label{sect 4}
	
	Recall that $\Sigma_g$ denotes the closed orientable surface of genus $g>1$. The fundamental group $\pi_1(\Sigma_g)$ has a canonical presentation:
	$$
	\pi_1(\Sigma_g)=\langle a_1,b_1,\ldots,a_g,b_g\mid [a_1,b_1]\cdots[a_g,b_g]=1 \rangle,
	$$
	where $[a, b]=aba^{-1}b^{-1}$.
	First, we have the following product formula for the index of a fixed point class.
	
	\begin{lem}\label{product formula for the index}
		Let $p: E=M\times N\to M$ be the projection to the first factor, where $\chi(M)\neq 0$ and $N$ is a closed nilmanifold. Let $f:E\to E$ be a fiber-preserving map inducing the identity $\id: M\to M$ on the base space, i.e., the following diagram is commutative.
		$$
		\xymatrix{
			E \ar[r]^f\ar[d]_p & E \ar[d]^p\\
			M\ar[r]^{\id} & M
		}
		$$
		Then, for any essential fixed point class $\F$ of $f$, the projection $p(\F)=M$ and
		$$|\ind(f,\F)|=[\pi_1(M) : p_{\#}(\fix f_{\#})]\cdot|\chi(M)|,$$
		where $f_{\#}: \pi_1(E, e)\to\pi_1(E,e)$ is the natural homomorphism induced by $f$  for $e\in \F$.
	\end{lem}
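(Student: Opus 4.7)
The plan is to bring $f$ into a fibre-split model and then analyse the fixed-point set directly. Since $p\circ f=p$, the induced endomorphism $f_\#$ fixes the first factor of $\pi_1(E)=\pi_1(\Sigma_g)\times\Z^n$, and a derivation analogous to Lemma \ref{auto for product} (which works without the automorphism hypothesis because $\Z^n$ is abelian) yields
$$f_\#(\gamma,g)=(\gamma,\,\rho(\gamma)+L(g))$$
for a homomorphism $\rho:\pi_1(\Sigma_g)\to\Z^n$ and an endomorphism $L:\Z^n\to\Z^n$. Because $E$ and $T^n$ are aspherical, $f$ is then homotopic through fibre-preserving maps over the identity to the split model $f(x,y)=(x,\,h(x)+L(y))$, where $h:\Sigma_g\to T^n$ realises $\rho$. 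A direct computation gives $\fix f_\#=\{(\gamma,g):(I-L)g=\rho(\gamma)\}$, hence
$$p_\#(\fix f_\#)=\rho^{-1}\bigl((I-L)\Z^n\bigr)=:\Gamma_0\subseteq\pi_1(\Sigma_g).$$

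If $\det(I-L)=0$ then every fibre map $f_b$ has $\Nie(f_b)=|\Lef(f_b)|=|\det(I-L)|=0$, so by the product formula for the Nielsen number of a fibration with Jiang-space fibre $\Nie(f)=0$ and the lemma is vacuous. Assume therefore $\det(I-L)\neq 0$. Then $I-L:T^n\to T^n$ is a covering of degree $|\det(I-L)|$ and
$$\fix f=\{(x,y):(I-L)y=h(x)\}=\Sigma_g\times_{h,\,I-L}T^n$$
is a $|\det(I-L)|$-sheeted cover of $\Sigma_g$ via $p$. Every connected component $S$ is therefore a closed surface with $p(S)=\Sigma_g$, which immediately gives $p(\F)=\Sigma_g$. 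A path-lifting argument in the universal cover of $T^n$ shows that a loop $\gamma\in\pi_1(\Sigma_g)$ lifts to a loop in $S$ exactly when $(I-L)^{-1}\rho(\gamma)\in\Z^n$, i.e.\ exactly when $\gamma\in\Gamma_0$; hence $S\to\Sigma_g$ has degree $d:=[\pi_1(\Sigma_g):\Gamma_0]$, and multiplicativity of the Euler characteristic gives $\chi(S)=d\cdot\chi(\Sigma_g)$. The same analysis, combined with a Reidemeister-class comparison of lifts of $f$ in the universal cover of $E$, shows that distinct components of $\fix f$ lie in distinct fixed-point classes; so each essential class $\F$ is a single component $S$.

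It remains to compute $\ind(f,S)$. The key point is that $S$ is a \emph{clean} fixed-point submanifold of $f$: its normal bundle in $E$ is identified with the (trivial) vertical tangent bundle $\Sigma_g\times\mathbb{R}^n$ restricted to $S$, and the normal derivative of $f-\mathrm{id}$ equals $L-I$, which is invertible. By the standard index formula for clean fixed-point submanifolds (Dold, cf.\ Fuller),
$$\ind(f,S)=\mathrm{sign}\bigl(\det(I-L)\bigr)\cdot\chi(S)=\pm\, d\cdot\chi(\Sigma_g),$$
so $|\ind(f,\F)|=d\cdot|\chi(\Sigma_g)|=[\pi_1(\Sigma_g):p_\#(\fix f_\#)]\cdot|\chi(\Sigma_g)|$, as claimed. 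The main obstacle is this last step: converting a positive-dimensional fixed-point set into an Euler characteristic. If one prefers to avoid invoking the clean-manifold index theorem as a black box, one can instead perturb $f$ by a vertical vector field extending a vector field on $S$ with isolated zeros of total index $\chi(S)$, producing an isolated-fixed-point model whose local indices sum to $\mathrm{sign}(\det(I-L))\cdot\chi(S)$.
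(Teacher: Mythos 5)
Your proof is correct, and it takes a genuinely different route from the paper's. The paper disposes of the lemma in two sentences by invoking the general machinery of Nielsen theory for fiber spaces: essential fixed point classes of torus self-maps have index $\pm1$ (Brooks--Brown--Pak--Taylor, Halpern), and then Jiang's results on fiber-preserving maps — that $p$ carries fixed point classes of $f$ onto fixed point classes of $\bar f=\id$ (giving $p(\F)=\Sigma_g$), together with the product formula $\ind(f,\F)=\ind(\bar f,\bar\F)\cdot\sum_i\ind(f_b,\F_{b,i})$, where the sum runs over the $[\pi_1(\Sigma_g):p_\#\fix f_\#]$ fiber classes meeting $\F$ — deliver the formula directly. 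You instead homotope $f$ to the explicit split model $(x,y)\mapsto(x,h(x)+Ly)$, identify $\fix f$ with the pullback covering $\Sigma_g\times_{h,\,I-L}T^n$, show each Nielsen class is a single component (a degree-$d$ cover of $\Sigma_g$ with $d=[\pi_1(\Sigma_g):\Gamma_0]$), and then compute the index via the clean/nondegenerate fixed-point-submanifold formula $\ind(f,S)=\mathrm{sign}\det(I-D_Nf)\cdot\chi(S)$. Your normal-bundle identification with the vertical bundle is legitimate (since $TS$ meets the vertical subbundle trivially), and the normal derivative is indeed $L$. The advantage of your route is that it is self-contained and geometrically transparent, avoiding the full force of Jiang's fiber-space product theorems; the cost is that you import a differential-topological index theorem as a black box and must do a bit of work (the lift/Reidemeister argument, which you only sketch) to see that the Nielsen classes are exactly the components of $\fix f$. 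One small caveat worth flagging: your argument establishes $p(\F)=\Sigma_g$ for the split model; for the original $f$ this is actually a consequence of the general fact that $p$ sends fixed point classes onto fixed point classes of $\bar f$ (the part of Jiang's theorem the paper cites), which your write-up does not separately address, though the index formula itself is homotopy-invariant so the main conclusion is unaffected.
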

	
	\begin{proof}
		Note that for any selfmap of the closed nilmanifold $M$, its essential fixed point classes have the same index $\pm 1$ \cite{K05}. Then the conclusion follows from \cite[p.78, Theorem 1.6]{J83} and the ``Product formula for the index of a fixed point class" \cite[p.85, Theorem 3.3]{J83}.
	\end{proof}
	
	Now, we show the following lemmas for convenience of our discussion.
	
	\begin{lem}\label{invert}
		Let $L_m=
		\begin{pmatrix}
			m+1 & m \\
			1 & 1  \\
		\end{pmatrix}
		$ be a matrix and let $I$ be the identity matrix. Then the matrix $I-L_m^k$ is invertible for all  integers $k,m>0$.
	\end{lem}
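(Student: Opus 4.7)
The plan is to diagonalize $L_m$ over $\Real$ and use that the eigenvalues of $L_m^k$ are the $k$-th powers of those of $L_m$. First I would compute $\tr(L_m)=m+2$ and $\det(L_m)=(m+1)\cdot 1-m\cdot 1=1$, so the characteristic polynomial is $\lambda^2-(m+2)\lambda+1$, whose discriminant $(m+2)^2-4=m^2+4m$ is strictly positive for $m>0$. Hence $L_m$ has two distinct real eigenvalues
$$\lambda_{\pm}=\frac{(m+2)\pm\sqrt{m^2+4m}}{2},$$
satisfying $\lambda_+\lambda_-=1$ and $\lambda_++\lambda_-=m+2$.

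Next, since their product is $1>0$ and their sum is $m+2>0$, both eigenvalues are positive; being distinct, positive and mutual reciprocals, they must satisfy $0<\lambda_-<1<\lambda_+$. Consequently, for every integer $k>0$ we have $\lambda_+^k>1$ and $0<\lambda_-^k<1$, in particular neither equals $1$, and
$$\det(I-L_m^k)=(1-\lambda_+^k)(1-\lambda_-^k)\neq 0,$$
which is exactly the claim that $I-L_m^k$ is invertible. I do not anticipate any real obstacle: the whole argument reduces to the two observations that $\det(L_m)=1$ (so the eigenvalues are reciprocal) and $\tr(L_m)=m+2>2$ (so neither eigenvalue can equal $1$), and both are visible at a glance from the entries of $L_m$.
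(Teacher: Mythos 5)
Your proof is correct and follows essentially the same route as the paper: compute the eigenvalues $\lambda_{\pm}=\frac{(m+2)\pm\sqrt{m^2+4m}}{2}$ explicitly and observe that they cannot be roots of unity, so $\lambda_{\pm}^k\neq 1$ and $I-L_m^k$ is invertible. You spell out the last step more carefully than the paper (noting $\det L_m=1$, $\tr L_m>2$, hence $0<\lambda_-<1<\lambda_+$), but the underlying idea is identical.
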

	
	\begin{proof}
		By direct calculation, we obtain the eigenvalues of $L_m$ are $\lambda_{1,2}=\frac{m+2\pm \sqrt{m^2+4m}}{2},$ which are not unit roots for every integer $m>0$. So the eigenvalues of $L^k$ are not equal to $1$ and hence  $I-L_m^k$ is invertible for all  integers $k,m>0$.
	\end{proof}
	
	\begin{lem}\label{lem index m}
		Let $p: \pi_1(\Sigma_g)\times \Z\to \pi_1(\Sigma_g)$ be the projection to the first factor, and let
		$\rho: \pi_1(\Sigma_g)\to \Z$ be an epimorphism.	
		For any integer $m>0$, let
		$$H_m=\{(u,s)\in\pi_1(\Sigma_g)\times \Z \mid s=-\frac{\rho(u)}{m}, ~\rho(u)\equiv0\mod m\}.$$
		Then $p(H_m)$ is a subgroup of $\pi_1(\Sigma_g)$ of index
		$[\pi_1(\Sigma_g):p(H_m)]=m.$
	\end{lem}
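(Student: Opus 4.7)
The plan is short: the whole lemma reduces, after unwinding the definition of $H_m$, to an application of the first isomorphism theorem for $\Z/m\Z$.

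First I would identify $p(H_m)$ with a familiar subgroup. An element $u\in\pi_1(\Sigma_g)$ lies in $p(H_m)$ iff there exists $s\in\Z$ with $(u,s)\in H_m$. The defining conditions $\rho(u)\equiv 0\pmod m$ and $s=-\rho(u)/m$ are compatible precisely when $m\mid\rho(u)$, and in that case $s\in\Z$ is uniquely determined. Consequently
$$p(H_m)=\{u\in\pi_1(\Sigma_g)\mid\rho(u)\in m\Z\}=\rho^{-1}(m\Z).$$
Since $m\Z$ is a subgroup of $\Z$ and $\rho$ is a homomorphism, $\rho^{-1}(m\Z)$ is automatically a subgroup of $\pi_1(\Sigma_g)$, handling the ``subgroup'' half of the claim with no further work.

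Second, for the index I would compose $\rho$ with the quotient $\Z\twoheadrightarrow\Z/m\Z$ to obtain
$$\bar\rho\colon \pi_1(\Sigma_g)\xrightarrow{\rho}\Z\twoheadrightarrow\Z/m\Z.$$
The hypothesis that $\rho$ is surjective makes $\bar\rho$ surjective, and $\ker\bar\rho=\rho^{-1}(m\Z)=p(H_m)$. The first isomorphism theorem then gives $\pi_1(\Sigma_g)/p(H_m)\cong\Z/m\Z$, so $[\pi_1(\Sigma_g):p(H_m)]=m$.

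There is no real obstacle; the only point that requires attention is the bijective correspondence between $H_m$ and $\rho^{-1}(m\Z)$ via $p$ (the second coordinate $s$ is uniquely determined by $u$), which is why the image is exactly $\rho^{-1}(m\Z)$ rather than something smaller.
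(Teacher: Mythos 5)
Your proof is correct and follows essentially the same route as the paper: identify $p(H_m)$ with $\rho^{-1}(m\Z)$ and use the surjectivity of $\rho$ to conclude the index is $[\Z:m\Z]=m$. The paper states this more tersely; your appeal to the first isomorphism theorem just makes the last step explicit.
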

	
	\begin{proof}
		Note that $\rho: \pi_1(\Sigma_g)\to \Z$ is an epimorphism, and $$p(H_m)=\{u\in\pi_1(\Sigma_g)\mid \rho(u)\equiv0\mod m\},$$
		so
		$[\pi_1(\Sigma_g): p(H_m)]=[\Z:m\Z]=m.$
	\end{proof}
	
	\begin{proof}[\textbf{Proof of Theorem \ref{main thm 0}}]
		By Theorem \ref{ZZ cout-examp.}, $\Sigma_g\times S^1$ has BIPH and hence it has BIPH$_k$ for any integer $k>0$ clearly. Now, let
		$\rho: \pi_1(\Sigma_g)\to \pi_1(S^1)=\Z$ be an epimorphism defined as follows:
		\begin{equation}\label{eq. rho to torus}\nonumber
			\rho(a_1)=1, ~\rho(b_1)=\rho(a_2)=\rho(b_2)=\cdots=\rho(a_g)=\rho(b_g)=0.  \end{equation}
		
		\textbf{(1).} Let $G=\pi_1(\Sigma_g)\times \pi_1(S^1)$. For any integer $m>0$, we pick an endomorphism $\phi\in\mathrm{End}(G)$ as
		$$\phi(u,s)=(u, \rho(u)+(m+1)s),~(u,s)\in \pi_1(\Sigma_g)\times \pi_1(S^1).$$
		By direct calculation, we obtain
		$$\phi^k(u,s)=(u, \sum^{k-1}_{i=0}(m+1)^i\rho(u)+(m+1)^ks).$$
		Hence
		\begin{eqnarray}
			\fix(\phi^k)
			& = & \{(u,s)\mid s=\sum^{k-1}_{i=0}(m+1)^i\rho(u)+(m+1)^ks\}\notag\\
			& = & \{(u,s)\mid (1-(m+1))^k)s=\sum^{k-1}_{i=0}(m+1)^i\rho(u)\}\notag\\
			& = & \{(u,s)\mid s=-\frac{\rho(u)}{m}, ~\rho(u)\equiv0\mod m\}.\notag
		\end{eqnarray}
		By Lemma \ref{lem index m}, we have $[\pi_1(\Sigma_g): p(\fix (\phi^k))]=m.$
		
		Moreover, note that $S^1$ is aspherical, we can pick a fiber-preserving map
		\begin{eqnarray}
			f: \Sigma_g\times S^1&\to& \Sigma_g\times S^1\notag\\
			(x, ~y)&\mapsto& (x, ~f_2(x, y))\notag
		\end{eqnarray}
		such that the induced homomorphism $f_{\#}=\phi$ on the fundamental group. The restriction of $f^k$ on each fiber $S^1$ is a selfmap of degree $(m+1)^k$, and hence has Lefschetz number $1-(m+1)^k$. By the ``Product formula for the Lefschetz number" \cite[p.85, Theorem 3.2]{J83}, we have
		\begin{equation}\label{eq-Lef1}\nonumber
			\Lef(f^k) = (1-(m+1)^k)\cdot \Lef(\id) = (1-(m+1)^k)\cdot \chi(\Sigma_g)\neq 0.
		\end{equation}
		Therefore, there is an essential fixed point class $\F$ of $f^k$ and by Lemma \ref{product formula for the index}, we have
		$$|\ind(f^k,\F)|=[\pi_1(\Sigma_g): p(\fix (\phi^k))]\cdot |\chi(\Sigma_g)|=m|\chi(\Sigma_g)|=m(2g-2).$$
		Recall that $m$ can be arbitrarily large,  we have proven that $\Sigma_g\times S^1$ does not have BIP$_k$ for any integer $k\geq 1$.\\
		
		\textbf{(2).} We first consider the case $n=2$. Let $G=\pi_1(\Sigma_g)\times \pi_1(T^2)$. For any integer $m>0$, let the matrix $L_m=
		\begin{pmatrix}
			m+1 & m \\
			1 & 1  \\
		\end{pmatrix}$  as in Lemma \ref{invert}, which can be viewed as an automorphism of $\pi_1(T^2)=\Z\times \Z$.  Now, we pick  an automorphism $\phi\in\aut(G)$ as
		$$\phi(u,{{s_1}\choose{s_2}})=(u,{{\rho(u)}\choose{0}}+L_m{{s_1}\choose{s_2}}),$$
		where $u\in \pi_1(\Sigma_g)$ and $(s_1,s_2)^T\in \pi_1(T^2)= \Z\times \Z$. Then for any integer $k>0$,
		$$\phi^k(u,{{s_1}\choose{s_2}})=(u,\sum^{k-1}_{i=0}L_m^{i}{{\rho(u)}\choose{0}}+L_m^k{{s_1}\choose{s_2}}).$$
		Note that the matrix $I-L_m^k=(I-L_m)(\sum^{k-1}_{i=0}L_m^{i})$ is invertible by Lemma \ref{invert},  we have
		\begin{eqnarray}
			\fix(\phi^k)
			& = & \{(u,{{s_1}\choose{s_2}})\mid {{s_1}\choose{s_2}}=\sum^{k-1}_{i=0}L_m^{i}{{\rho(u)}\choose{0}}+L_m^k{{s_1}\choose{s_2}}\}\notag\\
			& = & \{(u,{{s_1}\choose{s_2}})\mid (I-L_m^k){{s_1}\choose{s_2}}=\sum^{k-1}_{i=0}L_m^{i}{{\rho(u)}\choose{0}}\}\notag\\
			& = & \{(u,{{s_1}\choose{s_2}})\mid (I-L_m){{s_1}\choose{s_2}}={{\rho(u)}\choose{0}}\}\notag\\
			& = & \{(u,{{s_1}\choose{s_2}})\mid {{s_1}\choose{s_2}}={{0}\choose{-\frac{\rho(u)}{m}}}, ~\rho(u)\equiv 0\mod m\},\notag
		\end{eqnarray}
		where the condition ``$\rho(u)\equiv 0\mod m$'' is necessary because $s_2\in \Z$.
		Then, by Lemma \ref{lem index m}, we have
		$$[\pi_1(\Sigma_g): p(\fix (\phi^k))]=m,$$
		where $p: \pi_1(\Sigma_g)\times \pi_1(T^2)\to \pi_1(\Sigma_g)$ is the projection to the first factor.
		
		Moreover, note that $T^2$ is aspherical, we can pick a fiber-preserving homeomorphism
		\begin{eqnarray}\label{eq. f on T2}
			f: \Sigma_g\times T^2&\to& \Sigma_g\times T^2\\
			(x, ~y)&\mapsto& (x, ~f_2(x, y))\notag
		\end{eqnarray}
		such that the induced automorphism $f_{\#}=\phi$ on the fundamental group. The restriction of $f^k$ on each fiber $T^2$ has Lefschetz number $\det(I-L^k_m)\ne0$ from Lemma \ref{invert}. By the ``Product formula for the Lefschetz number" \cite[p.85, Theorem 3.2]{J83}, we have
		\begin{equation}\label{eq-Lef1}\nonumber
			\Lef(f^k) = \det(I-L^k_m)\cdot \chi(\Sigma_g)\ne 0.
		\end{equation}
		Therefore, there is an essential fixed point class $\F$ of $f^k$ and by Lemma \ref{product formula for the index}, we have
		\begin{equation}\label{eq. ind fk}
			|\ind(f^k,\F)|=[\pi_1(\Sigma_g): p(\fix (\phi^k))]\cdot |\chi(\Sigma_g)|=m|\chi(\Sigma_g)|=m(2g-2).
		\end{equation}
		Recall that $m$ can be arbitrarily large,  we have proven that $\Sigma_g\times T^2$ does not have BIPH$_k$ for any integer $k\geq 1$.\\
		
		Finally, for the remaining case $\Sigma_g\times T^n(n>2)$,  let $G=\pi_1(\Sigma_g)\times \pi_1(T^n)$. For any integer $m>0$, let the $n\times n$ matrix
		$$
		L_m=\begin{pmatrix}
			m+1&m&m&\cdots&m&m\\
			1&1&0&\cdots&0&0\\
			1&1&1&\cdots&0&0\\
			\vdots&\vdots&\vdots&\ddots&\vdots&\vdots\\
			1&1&1&\cdots&1&0\\
			1&1&1&\cdots&1&1\\
		\end{pmatrix}.
		$$
		Note that $\det(\lambda I-L_m)=(\lambda-1)^n-m\lambda^{n-1}$, for any integer $k>0$ and sufficiently large $m$, no roots of unity are eigenvalues of $L_m$ and hence $\det(I-L_m^k)\neq 0$. Moreover, $L_m$ can be viewed  as an automorphism of $\pi_1(T^n)=\Z^n$.
		Let $\phi\in\aut(G)$ be defined as
		$$\phi(u,\begin{pmatrix}
			s_1\\
			s_2\\
			\vdots\\
			s_n\\
		\end{pmatrix})=(u,\begin{pmatrix}
			\rho(u)\\
			0\\
			\vdots\\
			0\\
		\end{pmatrix}+L_m\begin{pmatrix}
			s_1\\
			s_2\\
			\vdots\\
			s_n\\
		\end{pmatrix})$$
		where $u\in \pi_1(\Sigma_g)$ and $(s_1,s_2,\ldots,s_n)^{T}\in \pi_1(T^n)= \Z^n$.
		Now, we can pick a fiber-preserving homeomorphism
		\begin{eqnarray}\nonumber
			f: \Sigma_g\times T^n&\to& \Sigma_g\times T^n\\
			(x, ~y)&\mapsto& (x, ~f_2(x, y))\notag
		\end{eqnarray}
		such that $f_\#=\phi.$ Using the same method of the above case ``$n=2$", we can show there is also an essential fixed point class $\F$ of $f^k$ such that
		$$|\ind(f^k,\F)|=m|\chi(\Sigma_g)|=m(2g-2).$$
		Thereofore, $\Sigma_g\times T^n(g, n\geq 2)$ does not have BIPH$_k$ for any integer $k\geq 1$.
	\end{proof}
	
	\section{Proofs of Theorems \ref{main thm 1}-\ref{main thm 3}}\label{sect 5}
	
	\begin{proof}[\textbf{Proof of Theorem \ref{main thm 1}}]
		For every homotopy equivalence $f: M\times N\to M\times N$ and  any integer $k>0$ divisible by the order $|\out(\pi_1(M))|$,  by Theorem \ref{main thm 2}, we have $\Nie(f^k)=0$. Therefore, $\ind(f^k, \F)=0$ for every fixed point class $\F$ of $f^k$. Hence $M\times N$ has BIPHE$_k$.
	\end{proof}
	
	\begin{proof}[\textbf{Proof of Theorem \ref{main thm 2}}]
		Since $M$ is a closed negatively curved Riemannian manifold of odd dimension, $\chi(M)=0$, $\pi_1(M)$ is centerless and $\out(\pi_1(M))$ is a finite group by Rips and Sela \cite{RS} building on ideas of Paulin. Let the integer $k>0$ be divisible by the order $|\out(\pi_1(M))|$.
		If $f^{k}$ is fixed point free, it is clear that
		$$\Lef(f^k)=\Min(f^k)=\Nie(f^k)=0.$$
		
		Now, we suppose that $(s,t)\in M\times N$ is a fixed point of $f^k$, $\Gamma=\pi_1(M, s)$ and $G=\pi_1(N, t)$ throughout the proof. By Eq. (\ref{fibration map}), after a homotopy, we can assume that the homotopy equivalence $f^k:  M \times N \rightarrow M \times N$ is a fiber-preserving map as follows:
		$$f^k(x,y)=(f^k_1(x),f'_{2,x}(y)),$$
		where the map $f^k_1: M\to M$ induces $(f^k_1)_\#=\alpha^k\in\aut(\Gamma)$, $f'_{2,x}: N\to N$ induces $(f'_{2,x})_\#=L^k\in\aut(G)$ for every $x\in M$.
		Since $\a\in \aut(\Gamma)$ and $k>0$ is divisible by the order $|\out(\Gamma)|$, we have $\alpha^k\in \inn(\Gamma)$. Moreover, recall that a closed negatively curved Riemannian manifold is aspherical,  so $M$ is aspherical and hence the map $f^k_1: M\to M$ can be homotopied to the identity $\id_M$ of $M$. Since $\chi(M)=0$, the unique nonempty fixed point class of $\id_M$ is $M$ itself with index
		$$\ind(\id_M,  M)=\chi(M)=0.$$
		It  follows that $\Nie(\id_M)=0$ and hence
		$$\Nie(f^k_1)=\Lef(f_1^k)=\Lef(\id_M)=\Nie(\id_M)=0.$$
		Then by the ``Product formula for the Nielsen number'' \cite[p.88, Corollary 4.3]{J83}, we have
		$$\Nie(f^k)=\Nie(f_1^k)\cdot \Nie(f'_{2,x})=0.$$
		Therefore, by Lemma \ref{Jiang's deep thm Mf=Nf}, we have
		$$\Lef(f^k)=\Min(f^k)=\Nie(f^k)=0.$$
		We complete our proof.
	\end{proof}

	\begin{proof}[\textbf{Proof of Theorem \ref{main thm 3}}]
		If $\Nie(f^k)=0$, then, by Lemma \ref{Jiang's deep thm Mf=Nf}, we obtain that
		$$\Lef(f^k)=\Min(f^k)=\Nie(f^k)=0.$$
		So Conclusion (1) holds.
		
		If $\Nie(f^k)\neq 0,$ then $f^k$ has a fixed point $(x,y)=f^k(x,y)\in M\times N$. Suppose $\Gamma=\pi_1(M)$ and $G=\pi_1(N)$ throughout the proof. Then
		the automorphism $f_\#$ of $\Gamma\times G$ induced by $f$ has the following form as in Eq. (\ref{eq. aut on fundamental group}):
		$$f_\#(\gamma, g)=(\alpha(\gamma),\rho(\gamma)L(g)),~\mathrm{for}~(\gamma,g)\in \Gamma\times G,$$
		where $\alpha : \Gamma\rightarrow \Gamma$ and $L : G \rightarrow G$ are automorphisms and $\rho :  \Gamma\rightarrow C(G)$ is a homomorphism into the center of $G$. Recall that $k>0$ is an integer divisible by the order $|\Out(\Gamma)|$, hence $\alpha^k : \Gamma\rightarrow \Gamma$ is an inner automorphism and
		\begin{equation}\label{fk on on group}
			f^k_\#(\gamma,g)=(\alpha^k(\gamma),\rho'(\gamma)L^k(g)),
		\end{equation}
		where $L^k : G \rightarrow G$ is an automorphism and $\rho' :  \Gamma\rightarrow C(G)$ is a homomorphism. Note that $\det(I-\mathcal{L}^k)\neq 0$. Otherwise, if $\det(I-\mathcal{L}^k)=0$ we obtain $\Nie(f^k)=0$ by Lemma \ref{periodic point}, it contradicts our assumption. Then applying Lemma \ref{alpha is inner} to $f^k$,
		there exists a lifting $\widetilde{f^k}$ of $f^k$ in a finite cover $\widetilde{M}\times N$ of $M\times N$, and a homeomorphism $h: \widetilde{M}\times N\to \widetilde{M}\times N$, such that $$h\comp \widetilde{f^k}\comp h^{-1}\simeq f_1\times f_2,$$
		where $f_1:\widetilde{M}\to \widetilde{M}$ and $f_2: N\to N$ are homotopy equivalences.
		So Conclusion (2) holds.
	\end{proof}

	\section{Lefschetz number and fixed point index of iterates}\label{sect 6}
	
	In this section, we show some further results on the Lefschetz number and fixed point index of iterates.
	Suppose that $M$ is a compact aspherical polyhedron with centerless fundamental group $\pi_1(M)=\Gamma$ and $N$ is a closed nilmanifold with fundamental group $\pi_1(N)=G$. In Theorem \ref{main thm 2}, the order $|\out(\pi_1(M))|$ is finite and $\chi(M)=0$, then for any homotopy equivalence $f:M\times N\to M\times N$ we can obtain $\Lef(f^k)=\Min(f^k)=\Nie(f^k)=0$ for every integer $k>0$ divisible by the order $|\out(\pi_1(M))|$. Hence, in this section, we consider the case that $\chi(M)\ne 0$.
	
	For the sake of readability, we emphasize the following facts mentioned in Section \ref{sect 3}.  Let  $f:M\times N\to M\times N$ be a homotopy equivalence. Then we have
	$$f_\#(\gamma, g)=(\alpha(\gamma),\rho(\gamma)L(g)), ~(\gamma,g)\in \Gamma\times G,$$
	where $\alpha\in \aut(\Gamma)$, $L\in\aut(G)$ and $\rho:\Gamma\to C(G)$ is a homomorphism.  Therefore, $f$ can be homotopied to a
	fiber-preserving map (still denoted by $f$) as follows:
	\begin{equation}\label{Eq. 6.1 fibration map'}
		f(x,y) = (f_1 (x),f_2 (x,y)),
	\end{equation}
	where $f_1 : M \rightarrow M$ induces $f_{1\#} = \alpha\in \aut(\Gamma)$, and for any $x \in M$,
	$$f_{2,x}(\cdot):=f_2(x,\cdot) : N \rightarrow N$$
	induces $f_{2,x \#} = L\in \aut(G).$ Let $\mathcal{L}$ be the linearization of $f_{2,x}:N\to N$.
	Note that the matrix $\mathcal{L}$ can also be produced as the derivative $\mathcal{L}': \mathfrak{G}\to \mathfrak{G}$ on the Lie algebra $\mathfrak{G}$ of $G$ of the automorphism
	$L: G \to G$.
	
	With the above notations, we have the following proposition.
	
	\begin{prop}\label{inn infy}
		Let $M$ be a compact aspherical polyhedron and $N$ a closed nilmanifold. If $\chi(M)\ne 0$ and $\pi_1(M)=\Gamma$ is centerless, then for any homotopy equivalence $f:M\times N\to M\times N$ with $f_{1\#}\in \inn(\Gamma)$,  one of the following holds:
		\begin{enumerate}
			\item if $\mathcal{L}$ has an eigenvalue that is a root of unity of order $\ell$, then $$\Lef(f^k)=\Min(f^k)=\Nie(f^k)=0,$$
			for every positive integer $k$ such that $\ell|k$, or
			\item if $\mathcal{L}$ has no eigenvalues that are roots of unity, then there exists an integer $\mathcal{B}_f>0$ such that for every integer $k>0$ and every fixed point class $\F$ of $f^k$, we have $|\ind(f^k,\F)|\leq\mathcal{B}_f$. Moreover, $$|\Lef(f^k)|\to +\infty \quad and ~\quad\Nie(f^k)\to +\infty$$ when $k\to +\infty$.
		\end{enumerate}
	\end{prop}
	
	\begin{proof}
		(1) $\mathcal{L}$ has an eigenvalue that is a root of unity of order $\ell$. Then for every positive integer $k$ such that $\ell|k$, we have $\det(I-\mathcal{L}^k)=0$, and hence by Lemma \ref{periodic point},
		$$\Lef(f^k)=\Min(f^k)=\Nie(f^k)=0.$$
		
		(2) $\mathcal{L}$ has no eigenvalues that are roots of unity. Then $\det(I-\mathcal{L}^k)\ne 0$ for every integer $k>0.$	
		
		Since $f_{1\#}\in \inn(\Gamma)$, by Eq. (\ref{Eq. 6.1 fibration map'}), after a homotopy, we can assume that
		the homotopy equivalence $f:  M \times N \rightarrow M \times N$ is a fiber-preserving homotopy equivalence as follows:
		$$f(x,y)=(x,f_{2,x}(y)),$$
		where the map $f_{2,x}: N\to N$ induces $(f_{2,x})_\#=L: \pi_1(N)=G\to G$ for every $x\in M$, and
		$$f_\#(\gamma, g)=(\gamma, ~\rho(\gamma)L(g)), ~(\gamma,g)\in \Gamma\times G,$$
		where $L\in\aut(G)$ and $\rho:\Gamma\to C(G)$ is a homomorphism.
		Therefore,
		$$f^k(x,y)=(x,f'_{2,x}(y)),$$
		where $f'_{2,x}: N\to N$ induces $(f'_{2,x})_\#=L^k\in\aut(G)$ for every $x\in M$,  and
		$$f^k_{\#}(\gamma, g)=(\gamma, ~\rho(\gamma)L(\rho(\gamma))\cdots L^{k-1}(\rho(\gamma))L^k(g)).$$
		Then applying Lemma \ref{product formula for the index}, for every essential fixed point class $\F$ of $f^k$, we have
		\begin{equation}\label{eq. 6.2}
			|\ind(f^k,\F)|=[\Gamma:p_\#(\fix f^k_\#)]\cdot|\chi(M)|,
		\end{equation}
		where
		\begin{equation}\label{eq. 6.3}
			p_\#(\fix f^k_\#)=\{\gamma\in\Gamma\mid \exists g\in G,~\mathrm{s.t.}~g=\rho(\gamma)L(\rho(\gamma))\cdots L^{k-1}(\rho(\gamma))L^k(g)\}.
		\end{equation}
		
		Let us consider the endomorphism $\psi:C(G)\to C(G)$ defined in Equation (\ref{eq. psi}),
		$$\psi(g)=L(g)g^{-1},$$
		which is a monomorphism (since $\mathcal{L}$ does not have eigenvalue $1$) and hence gives  an isomorphism $\psi:C(G)\to \psi(C(G))$. Let
		$$\Gamma':=\rho^{-1}(\psi(C(G)))\leq \Gamma.$$
		For any $\gamma\in \Gamma'$, let $g:=\psi^{-1}(\rho(\gamma^{-1}))=(I-\mathcal{L})^{-1}(\rho(\gamma))\in C(G)$ (the operation in the center $C(G)$ is denoted by addition in the following). Then
		\begin{eqnarray}
			\rho(\gamma)L(\rho(\gamma))\cdots L^{k-1}(\rho(\gamma))L^k(g)
			& = & \sum^{k-1}_{i=0}\mathcal{L}^{i}({\rho(\gamma)})+\mathcal{L}^k (g)\notag\\
			& = & (I-\mathcal{L}^{k})(I-\mathcal{L})^{-1}(\rho(\gamma))+\mathcal{L}^k (g)\notag\\
			& = & (I-\mathcal{L}^{k})(g)+\mathcal{L}^k (g)\notag\\
			& = & g.\notag
		\end{eqnarray}
		Therefore, by Eq. (\ref{eq. 6.3}), we have $\Gamma'\leq p_\#(\fix f^k_\#),$ and hence  by Lemma \ref{Gamma' finite index in Gamma}, we obtain
		$$[\Gamma:p_\#(\fix f^k_\#)]\leq [\Gamma:\Gamma']< +\infty$$  for every integer $k>0$. Then by Eq. (\ref{eq. 6.2}), we have proven$$|\ind(f^k,\F)|\leq\mathcal{B}_f:=[\Gamma:\Gamma']\cdot|\chi(M)|<+\infty.$$
		Note that the bound $\mathcal{B}_f$ only depends on $f$ and $M$, and is independent of $k$.
		
		Furthermore, recall that $\mathcal{L}$ is an invertible matrix over the integers with no eigenvalues that are roots of unity, then $\mathcal{L}$ has an eigenvalue $\lambda$ with $|\lambda|>1$. Hence, by the ``Product formula for the Lefschetz number'' \cite[p.85, Theorem 3.2]{J83} and the assumption $\chi(M)\ne 0$, we obtain
		$$|\Lef(f^k)|=|\Lef(\id)\cdot \Lef(f'_{2,x})|=|\chi(M)\det(I-\mathcal{L}^k)|\to +\infty, ~\mathrm{when}~k\to+\infty.
		$$
		Therefore, by the famous Lefschetz-Hopf theorem and Eq. (\ref{eq. 6.2}),
		$\Nie(f^k)$ must tend to $+\infty$ when $k\to+\infty$.
	\end{proof}
	
	As a corollary of Proposition \ref{inn infy}, we have:
	
	\begin{thmbis}{iterate Lef infy}
		Let $f: M\times N\to M\times N$ be a homotopy equivalence, where $M$ is a compact aspherical polyhedron and $N$ is a closed nilmanifold.  If $\chi(M)\ne 0$, $\pi_1(M)$ is centerless and the order $|\out(\pi_1(M))|=m$ is finite, then one of the following holds:
		\begin{enumerate}
			\item there exists some integer $\ell>0$ such that for every integer $k>0$ and $\ell|k$, we have $$\Lef(f^k)=\Min(f^k)=\Nie(f^k)=0;$$
			\item there exists an integer $\mathcal{B}_f>0$ such that for every integer $k>0$, we have $|\ind(f^{mk},\F)|\leq\mathcal{B}_f$ for every fixed point class $\F$ of $f^{mk}$. Moreover, $$|\Lef(f^{mk})|\to +\infty \quad and ~\quad\Nie(f^{mk})\to +\infty$$ when $k\to +\infty$.
		\end{enumerate}
	\end{thmbis}
	
	\begin{proof}
		Let $\pi_1(M)=\Gamma$ and $\pi_1(N)=G$. By Eq. (\ref{Eq. 6.1 fibration map'}), after a homotopy, we can assume that
		the homotopy equivalence $f:  M \times N \rightarrow M \times N$ is a fiber-preserving homotopy equivalence as follows:
		$$f(x,y)=(f_1(x),f_2(x,y)),~(x,y)\in M\times N,$$
		where $f_1 : M \rightarrow M$ induces $f_{1\#} = \alpha\in \aut(\Gamma)$. By the assumption $|\out(\pi_1(M))|=m<+\infty$, we have
		$$f^m_{1\#}=\alpha^m\in \inn(\pi_1(M)).$$
		Then, by applying Proposition \ref{inn infy} to $f^m$, the conclusion holds.
	\end{proof}
	
	In Theorem \ref{iterate Lef infy}, if $N$ is the $n$-dimension torus, then the upper bound $\mathcal{B}_f$ can be revised
	to an achievable bound $\mathcal{A}_f\leq \mathcal{B}_f$, see the following.
	
	\begin{cor}
		Let $f: M\times T^n\to M\times T^n$ be a homotopy equivalence, where $M$ is a compact aspherical polyhedron and $T^n(n\geq 1)$ is the $n$-dimension torus.  If $\chi(M)\ne 0$, $\pi_1(M)$ is centerless and the order $|\out(\pi_1(M))|=m$ is finite, then one of the following holds:
		\begin{enumerate}
			\item there exists some integer $\ell>0$ such that for every integer $k>0$ and $\ell|k$ we have $$\Lef(f^k)=\Min(f^k)=\Nie(f^k)=0;$$
			\item there exists an integer $\mathcal{A}_f>0$ such that for every integer $k>0$, we have $|\ind(f^{mk},\F)|=\mathcal{A}_f$ for every essential fixed point class $\F$ of $f^{mk}$. Moreover, $$|\Lef(f^{mk})|\to +\infty \quad and ~\quad\Nie(f^{mk})\to +\infty$$ when $k\to +\infty$.
		\end{enumerate}
	\end{cor}
	
	\begin{proof}
		Since the proof is almost identical to that of Proposition \ref{inn infy} except that $f$ is replaced by $f^m$, we use the same notations as in Proposition \ref{inn infy}. Since $\pi_1(T^n)=\Z^n$ is abelian, the linearization $\mathcal{L}=L=f_{2,x\#}$. If $L$ has some eigenvalue that is a root of unity of order $\ell$, then Item (1) directly follows from Proposition \ref{inn infy}(1). Otherwise, $L$ has no eigenvalues that are roots of unity. Let $\pi_1(M)=\Gamma$. Since $|\out(\pi_1(M))|=m$ is finite, after a homotopy, we can assume that
		$$f_\#^{mk}(\gamma,g)=(\gamma, ~\sum^{k-1}_{i=0}L^{mi}({\rho(\gamma)})+L^{mk}(g)),$$
		and hence Eq. (\ref{eq. 6.3}) becomes
		\begin{eqnarray}
			p_\#(\fix f^{mk}_\#)&=&\{\gamma\in \Gamma\mid \exists g\in \Z^n,  \mathrm{s.t.} ~g=\sum^{k-1}_{i=0}L^{mi}({\rho(\gamma)})+L^{mk}(g)\}\nonumber\\
			&=&\{\gamma\in \Gamma\mid \exists g\in \Z^n,  \mathrm{s.t.} ~(I-L^m)(g)=\rho(\gamma)\}\nonumber.
		\end{eqnarray}
		Note that $[\Gamma:p_\#(\fix f^{mk}_\#)]\leq [\Gamma:\Gamma'] <+\infty$ as in the proof of Proposition \ref{inn infy}. Now, set $$\mathcal{A}_f:=[\Gamma:p_\#(\fix f^{mk}_\#)]\cdot|\chi(M)|,$$
		which is independent of $k$.
		Then by Eq. (\ref{eq. 6.2}), $|\ind(f^{mk},\F)|=\mathcal{A}_f<+\infty$ for every essential fixed point class $\F$ of $f^{mk}$. The remaining part of Item (2) directly follows from Proposition \ref{inn infy}(2) by replacing $f$ by $f^m$.
	\end{proof}

	\section{Some examples}\label{sect 7}

	In \cite{PZ}, for each pair of integers $n, \ell$ such that $n\geq 3$, $0\leq \ell < n$ and $\mathrm{gcd}(n,2-\ell) = 1$, Paoluzzi and Zimmermann constructed a compact orientable hyperbolic 3-manifold $M_{n,\ell}$ with totally geodesic boundary (a surface of genus $n-1$). The fundamental group $\pi_1(M_{n,\ell})=G_{n,\ell}$ has the following presentation
	$$G_{n,\ell}=\langle x_0,x_1,\ldots, x_{n-1}\mid \prod \limits^{n-1}_{i=0}x_{i(2-\ell)}x^{-1}_{i(2-\ell)+1}x^{-1}_{(i+1)(2-\ell)-1}=1 \rangle,$$
	where we take indices $\mathrm{mod}~n$. Note that $\out(\pi_1(M_{n,\ell}))$ is finite (which can be obtain by applying Mostow rigidity to the double of $M_{n,\ell}$), but $M_{n,\ell}$ is not closed. The following example shows that the product $M_{n,\ell}\times T^2$ does not necessarily have BIPH$_k$ (and hence not have BIP$_k$). It implies that the condition ``$M$ is closed" in Theorem \ref{main thm 1} is necessary.
	
	\begin{exam}\label{eg. 3-mfd with boudary} Set $n=3$ and $\ell=1$. The fundamental group of $M_{3,1}$ has the following presentation
		$$G_{3,1}=\langle x_0,x_1, x_2\mid x_0x_1^{-1}x_0^{-1}x_1x_2^{-1}x_1^{-1}x_2x_0^{-1}x_2^{-1}=1 \rangle,$$
		where $M_{3,1}$ has totally geodesic boundary $\Sigma_2$. Let $G=G_{3,1}\times \pi_1(T^2)$. For any integer $
		m>0$,  let the matrix $L_m=
		\begin{pmatrix}
			m+1 & m \\
			1 & 1  \\
		\end{pmatrix}$  as in Lemma \ref{invert}. We pick $\phi\in\aut(G)$ as
		$$\phi(u,{{s_1}\choose{s_2}})=(u,{{\rho(u)}\choose{0}}+L_m{{s_1}\choose{s_2}})$$
		where $u\in G_{3,1}$, $(s_1,s_2)^T\in \pi_1(T^2)\cong \Z\times \Z$ and $\rho: G_{3,1}\to \Z$ is defined as follows:
		$$\rho(x_0)=1, ~\rho(x_1)=0,~\rho(x_2)=-1.$$
		By direct calculation, we obtain
		$$\phi^k(u,{{s_1}\choose{s_2}})=(u,\sum^{k-1}_{i=0}L_m^{i}{{\rho(u)}\choose{0}}+L_m^k{{s_1}\choose{s_2}}).$$
		Recall that $(I-L_m^k)=(I-L_m)(\sum^{k-1}_{i=0}L_m^{i})$ is invertible by Lemma \ref{invert}, hence
		\begin{eqnarray}
			\fix(\phi^k)
			& = & \{(u,{{s_1}\choose{s_2}})\mid {{s_1}\choose{s_2}}=\sum^{k-1}_{i=0}L_m^{i}{{\rho(u)}\choose{0}}+L_m^k{{s_1}\choose{s_2}}\}\notag\\
			& = & \{(u,{{s_1}\choose{s_2}})\mid (I-L_m^k){{s_1}\choose{s_2}}=\sum^{k-1}_{i=0}L_m^{i}{{\rho(u)}\choose{0}}\}\notag\\
			& = & \{(u,{{s_1}\choose{s_2}})\mid (I-L_m){{s_1}\choose{s_2}}={{\rho(u)}\choose{0}}\}\notag\\
			& = & \{(u,{{s_1}\choose{s_2}})\mid {{s_1}\choose{s_2}}={{0}\choose{-\frac{\rho(u)}{m}}}, ~\rho(u)\equiv 0\mod m\}\notag.
		\end{eqnarray}
		Since $\rho:G_{3,1}\rightarrow \Z $ is an epimorphism,
		it is not hard to see that
		$$[G_{3,1}:p(\fix(\phi^k))]=m,$$
		where $p: G_{3,1}\times \pi_1(T^2)\to G_{3,1}$ is the projection to the first factor.
		
		Now, we can pick a homeomorphism
		\begin{eqnarray}\nonumber
			f: M_{3,1}\times T^2&\to& M_{3,1}\times T^2\\
			(x, ~y)&\mapsto& (x, ~f_2(x, y))\notag
		\end{eqnarray} such that $f$ induces $\phi$ on the fundamental group. The restriction of $f^k$ on each fiber $T^2$ has Lefschetz number $\det(I-L^k_m)\ne0$ from Lemma \ref{invert}. By the ``Product formula for the Lefschetz number" \cite[p.85, Theorem 3.2]{J83}, we have
		\begin{equation}\label{eq-Lef1}\nonumber
			\Lef(f^k) = \det(I-L^k_m)\cdot \chi(M_{3,1})\ne 0.
		\end{equation}
		Therefore, there is an essential fixed point class $\F$ of $f^k$ and
		$$
		|\ind(f^k,\F)|=[G_{3,1}: p(\fix (\phi^k))]\cdot |\chi(M_{3,1})|=m|\chi(M_{3,1})|.
		$$
		Recall that $m$ can be arbitrarily large,  we have proven that $M_{3,1}\times T^2$ does not have BIPH$_k$ for any integer $k\geq 1$.
	\end{exam}

	Finally, the following example shows that the condition ``$k>0$ divisible by $|\Out(\pi_1(M ))|$'' is necessary in Theorem \ref{main thm 1}, even if $M$ is a closed hyperbolic $3$-manifold.
	
	\begin{exam}\label{eg. closed 3-mfd}
		Let $M$ be a closed orientable hyperbolic $3$-manifold with a retraction
		$$r: M\to\Sigma_g,$$
		where $\Sigma_g\subset M (g>1)$ is a totally geodesic embedded closed orientable hyperbolic surface fixed pointwise by an orientation-reversing isometry $f_1: M\to M$ of order $2$. (Such a manifold $M$ does exist, see \cite[Theorem 1.9]{BHW11}).
		Then, for any integer $m>0$, there is a homeomorphism
		\begin{eqnarray}\nonumber
			f: M\times T^2 &\to& M\times T^2\\
			(x, ~y)&\mapsto& (f_1(x), ~f_2(r(x),y))\notag
		\end{eqnarray}
		where $f_2: \Sigma_g\times T^2 \to T^2$ is defined as in Eq. (\ref{eq. f on T2}). Note that $$f|_{\Sigma_g\times T^2}(x, y)=(x, ~f_2(x,y)),$$
		and the fixed point set
		$$\fix f=\fix(f|_{\Sigma_g\times T^2}:\Sigma_g\times T^2\to\Sigma_g\times T^2)$$
		which is a nonempty fixed point class $\F:=\fix f$ (see \cite[Sect. 4]{ZZ23}). Since the restriction of $f_1$ to the neighborhood of $\Sigma_g$ in $M$ is a reflection, by Eq. (\ref{eq. ind fk}), we have the index
		$$|\ind(f, \F)|=|\ind(f|_{\Sigma_g\times T^2}, \F)|=m(2g-2).$$
		Since $m$ can be arbitrarily large, we have proven that $M\times T^2$ does not have BIPH. But it has BIPH$_k$ for integer $k>0$ divisible by $|\Out(\pi_1(M ))|$ according to Theorem \ref{main thm 1}.
	\end{exam}

	\noindent\textbf{Acknowledgements.} The authors are very grateful to Jiming Ma and Shengkui Ye for their valuable communications.


\end{document}